\tikzset{
    bt/.style={draw=blue,thick},
    ns/.style={circle,draw=blue,fill=blue, inner sep=0pt, minimum size=2mm},
    string/.style={draw=#1, postaction={decorate}, decoration={markings,mark=at position .45 with {\arrow[blue]{triangle 60}}}},
    doublestring/.style={draw=#1, postaction={decorate}, decoration={markings, mark=at position .7 with {\arrow[blue]{triangle 60}}, 
    mark=at position .3 with {\arrowreversed[blue]{triangle 60}}}},
    costring/.style={draw=#1, postaction={decorate}, decoration={markings,mark=at position .55 with {\arrow[draw=#1]{<}}}},
    arr/.style={string=blue, thick},
    doublearr/.style={doublestring=blue, thick},
    lin/.style={blue},
    dlin/.style = {blue, dashed, thick},
    dot/.style={circle,draw=#1,fill=#1,inner sep=1pt},
}
\numberwithin{equation}{section}
\theoremstyle{definition}
\newtheorem{Thm}{Theorem}[section]
\newtheorem{Lemma}{Lemma}[section]
\newtheorem{Rem}{Remark}[section]
\newcommand{\sh}[1]{\mathcal{#1}}
\DeclareMathOperator{\id}{id}
\DeclareMathOperator{\Uea}{\mathcal{U}}
\DeclareMathOperator{\SL}{SL}
\DeclareMathOperator{\C}{\mathbb{C}}
\DeclareMathOperator{\Z}{\mathbb{Z}}
\DeclareMathOperator{\diff}{d}
\DeclareMathOperator{\Norm}{N}
\DeclareMathOperator{\Lie}{Lie}
\DeclareMathOperator{\Proj}{\mathbb{P}}
\DeclareMathOperator{\Hom}{Hom}
\DeclareMathOperator{\length}{\ell}
\DeclareMathOperator{\Rder}{R}
\DeclareMathOperator{\Gm}{\mathbb{G}_m}
\DeclareMathOperator{\Ho}{H}
\DeclareMathOperator{\op}{op}
\DeclareMathOperator{\Dual}{\mathbb{D}}
\DeclareMathOperator{\Exten}{Ext}
\DeclareMathOperator{\Aff}{\mathbb{A}}
\DeclareMathOperator{\Lsimple}{L}
\DeclareMathOperator{\MVerma}{M}
\DeclareMathOperator{\RVerma}{R}
\DeclareMathOperator{\modulecat}{mod}
\DeclareMathOperator{\ac}{ac}
\title{Cohomology of twisted $\sh{D}$-modules on $\Proj^1$
obtained as extensions from $\C^{\times}$}
\begin{document}

\author{Claude Eicher}
\address{DEPARTEMENT MATHEMATIK, ETH ZUERICH, 8092 ZUERICH, SWITZERLAND}
\email{claude.eicher@math.ethz.ch}
\date{\today}
\maketitle

\setcounter{tocdepth}{2}

\begin{abstract}
We construct twisted $\sh{D}$-modules on the projective line $\Proj^1$ that
are equivariant for the action of the diagonal torus subgroup of $\SL_2$. 
In the most interesting case these arise as extensions from local systems on $\C^{\times}$. 
We discuss their subquotient
structure. Their sheaf cohomology groups are weight modules for
the Lie algebra $\mathfrak{sl}_2$. We also discuss
their subquotient structure and in case these
modules are not the familiar highest or lowest weight modules, we give
an explicit presentation for them. Our computations illustrate some basic
$\sh{D}$-module concepts and the Beilinson-Bernstein equivalence.  
They are the first step in a program that aims to describe categories of
modules over semisimple and affine Kac-Moody Lie algebras that are next to highest (or lowest)
weight via $\sh{D}$-modules on the flag variety. 
\end{abstract}

\tableofcontents

\section{Introduction and Organization}
Consider a local system on $\C^{\times}$ given
by the monodromy $e^{2\pi i\alpha}$, where $\alpha \in \C$ is a parameter. It can be described by the
differential equation $(x\partial_x - \alpha)f(x)=0$
and hence as a connection on $\C^{\times}$.
The formulation of these objects in the language
of $\sh{D}$-modules is given in \autoref{ssec:defofOmegaalpha}.
The theory of $\sh{D}$-modules
provides extensions (direct images) of such objects, living on $\C^{\times}$, to $\sh{D}$-modules on 
the projective line $\Proj^1$. The simplest such extension
is the mere sheaf-direct image, while the $!$-extension
is constructed from it via a duality functor (\autoref{ssec:holonomicduality}).
In addition, with the aim of being systematic, we introduce
yet another extension denoted $j_{!x\cdot z}$ in \autoref{ssec:directimagesofOmegaalpha}. 
We analyze all these extensions in \autoref{ssec:directimagesofOmegaalpha} by determining their subquotients
(composition series) in the category of $\sh{D}$-modules. 
In addition to $\C^{\times}$ we have also included for comparison in \autoref{ssec:directimagesofOmegaCxz} the easier and even better known case
of the affine line. 
As a technical aside, let us mention that we work with right $\sh{D}$-modules
starting from \autoref{ssec:directimagesofOmegaCxz} because the formation of direct images is easier than
for left $\sh{D}$-modules.\par

In \autoref{sec:H0andH1} we compute the sheaf cohomology groups
of our extensions as $\mathfrak{sl}_2$-modules using the
Cech complex for the standard affine open cover of $\Proj^1$. They turn out to be weight modules (\autoref{sec:generalitiesonsl2modules})
for $\mathfrak{sl}_2$ with trivial central character and lowest or highest weight modules in case of the affine line. 
The identification of these cohomology groups can be understood as a building block in the
description of the category of $\mathfrak{sl}_2$-weight modules with trivial central character via 
$\sh{D}$-modules on $\Proj^1$. The basic reason 
for the appearence of the subspace $\C^{\times}$ is that $\SL_2$ acts on $\Proj^1$ and
$\C^{\times}\sqcup \{0\}\sqcup \{\infty\}$ is the stratification of $\Proj^1$ by orbits of the diagonal torus subgroup
of $\SL_2$. Similarly, the affine lines $\Proj^1\setminus\{\infty\}$ resp. 
$\Proj^1\setminus\{0\}$ are the open orbit of the Borel resp. opposite Borel subgroup of $\SL_2$.  
The corresponding $\sh{D}$-module extensions (the $!$- resp. $*$-extension is usually called standard resp. costandard object
in this context) acquire an equivariance property w.r.t. the subgroup. The equivariance is addressed in \autoref{ssec:Gmequivariance} and in Remark \autoref{Rem:BBuntwisted}. 
Further, we discuss in \autoref{ssec:autoequivalences-} how the change of variables $x \mapsto x^{-1}$ induces auto-equivalences
$(\cdot)^-$ of the category of $\sh{D}$- resp. $\mathfrak{sl}_2$-modules.
\par

In \autoref{sec:twistedDmodules} we review the notion of twisted $\sh{D}$-modules. In our
case the twist parameter is an arbitrary integer $\lambda$. Subsequently we determine the sheaf cohomology groups of our extensions considered now
as twisted $\sh{D}$-modules, thereby generalizing the results of \autoref{sec:H0andH1}. They are again weight modules for $\mathfrak{sl}_2$ with
central character depending on $\lambda$.
In \autoref{ssec:descriptionofRVermalambdaalpha} we analyze the structure (composition series) of a weight module (depending on the parameters $\alpha$ and $\lambda$),
which appears as global sections of our extensions. 
The main results of this work are gathered into Theorems \autoref{Thm:hwmodulestwisted}, \autoref{Thm:jdotOmegaalpha}, \autoref{Thm:jdotOmega},
\autoref{Thm:j!Omegaalpha} and \autoref{Thm:j!dottwisted}. These are all proven by direct computations with the Cech complex. 
While parts of these results have previously appeared in the literature, our treatment considers all choices of the parameter $\alpha$
and all integral twists $\lambda$ for $\sh{D}$-modules. As explained in Remarks \autoref{Rem:BBuntwisted} and \autoref{Rem:BBtwisted}, the
results illustrate the Beilinson-Bernstein equivalence.\par

Finally, we sketch in \autoref{sec:outlook} how this work fits into a larger program by replacing $\SL_2$ by any
semisimple algebraic group over $\C$ or the affine Kac-Moody Lie algebra $\widehat{\mathfrak{sl}_2}$. 

\section*{Notation}
We use the symbol \S\ when referring to
entire sections. When making a statement that holds for multiple indices separately, for brevity we simply list
the indices, see e.g. \autoref{ssec:geometricsetup}: we write $\C_{x,z}$
instead of $\C_x, \C_z$.
We denote by curly letters sheaves
$\sh{F}$ and their sections over an open $U$ by $\sh{F}(U)$. 
If $G$ is a linear algebraic group over $\C$, then $\Lie G$ denotes its Lie algebra. 
All varieties we consider are algebraic varieties over $\C$. By a local system, we mean a vector bundle with flat connection.
We have tried to isolate statements that do not depend strongly on the rest of the text
or are used multiple times into remarks. 

\section*{Acknowledgements}
I am indebted to my supervisor G. Felder for his guidance during the preparation of this work,
for letting me present the main results to him and for his comments on the manuscript. 

\section{Untwisted $\sh{D}$-modules}\label{sec:untwistedDmodules}
\subsection{Left and right $\sh{D}$-modules}\label{ssec:leftandrightDmod}
We recall a few generalities on left and right $\sh{D}$-modules, see e.g. \cite{Kas00}[section 1.4]. 
Let $X$ be a smooth variety. Let $\sh{D}_X$ be the sheaf of 
algebraic differential operators on $X$. We denote by
$\sh{D}_X\mod$ resp. $\mod \sh{D}_X$ the category of 
left resp. right $\sh{D}_X$-modules quasicoherent as $\sh{O}_X$-modules. 
Then the structure sheaf $\sh{O}_X$ and
the sheaf of volume forms $\Omega_X$ is naturally a left resp. right $\sh{D}_X$-module. 
Further we have an isomorphism of sheaves of $\C$-algebras 
\begin{align}\label{eq:DXop}
\sh{D}_X^{\op} \xrightarrow{\cong} \Omega_X \otimes_{\sh{O}_X}\sh{D}_X\otimes_{\sh{O}_X} \Omega_X^{\otimes -1}\;.
\end{align} 
In (\'etale) local coordinates $(x_j)_{1 \leq j \leq n}$ on $X$ it is given by $p \mapsto \diff x \otimes  ^tp \otimes \diff x^{\otimes -1}$.
Here $\diff x=\diff x_1 \wedge \dots \wedge\diff x_n$ and $\diff x^{\otimes -1}$ satisfies $\langle \diff x, \diff x^{\otimes -1}\rangle = 1$.
The \emph{formal adjoint}  $^t(\cdot)$ is the unique anti-involution of $\sh{D}_X$ satisfying 
$^t(x_j)=x_j$ and $^t(\partial_{x_j})=-\partial_{x_j}$. Via the isomorphism $\eqref{eq:DXop}$ it becomes
manifest that $\Omega_X$ is a left $\sh{D}_X^{\op}$-module and that a left $\sh{D}_X^{\op}$-module
is the same as a $\Omega_X$-twisted left $\sh{D}_X$-module
(the notion of a twisted $\sh{D}$-module will be recalled 
in \autoref{ssec:generalitiesontwistedDmodules} below). The functor 
$(\cdot)^r: \sh{D}_X\modulecat \rightarrow \modulecat\sh{D}_X$ given on objects by 
$\sh{M} \mapsto \sh{M}^r = \Omega_X \otimes_{\sh{O}_X} \sh{M}$
is an exact equivalence. 
Similarly, the functor $(\cdot)^l: \modulecat\sh{D}_X \rightarrow \sh{D}_X\modulecat$, 
$\sh{M} \mapsto \sh{M}^l = \Omega_X^{\otimes -1} \otimes_{\sh{O}_X} \sh{M}$,
is an exact equivalence inverse to $(\cdot)^r$.

\subsection{Holonomic duality $\Dual$}\label{ssec:holonomicduality}
For the notions of this subsection, see e.g. \cite{Kas00}[section 4.11]. 
Let us assume $\dim X = 1$. 
Let $\sh{M}$ be a holonomic right $\sh{D}_X$-module (in particular $\sh{M}$ is required to be coherent). 
Its dual is defined as $\Dual \sh{M} = \sh{E}xt^1_{\sh{D}_X}(\sh{M},\sh{D}_X)^r$.
Here $\sh{E}xt^1_{\sh{D}_X}(\sh{M},\sh{D}_X)$ is naturally a left $\sh{D}_X$-module, to which we apply $(\cdot)^r$.
$\Dual$ defines an exact contravariant auto-equivalence of the category of holonomic $\sh{D}_X$-modules. 

\begin{Rem}\label{Rem:formulaforDualM} We recall a formula for $\Dual \sh{M}$
in case of $X=\C$, the affine line, and $\sh{M}=\sh{D}_{\C}/p\sh{D}_{\C}$,
where $p$ is a nonzero polynomial in $x$ and $\partial_x$.
Consider the free resolution of right $\sh{D}_{\C}$-modules
\begin{align}\label{eq:freeres}
0 \rightarrow \sh{D}_{\C} \xrightarrow{p\cdot} \sh{D}_{\C} \rightarrow \sh{M} \rightarrow 0\;.
\end{align}
The map $\sh{H}om_{\sh{D}_X}(\sh{D}_X,\sh{D}_X) \xrightarrow{\cong} \sh{D}_X$,
$\phi \mapsto \phi(1)$, is an isomorphism of left $\sh{D}_X$-modules. We apply $\sh{H}om_{\sh{D}_{\C}}(\cdot,
\sh{D}_{\C})$ to \eqref{eq:freeres} and get an exact sequence of left $\sh{D}_{\C}$-modules
\begin{align*}
0 \leftarrow \sh{D}_{\C} \xleftarrow{\cdot p} \sh{D}_{\C} \leftarrow \sh{H}om_{\sh{D}_{\C}}(\sh{M},\sh{D}_{\C}) \leftarrow 0\;.
\end{align*}
It follows  $\sh{H}om_{\sh{D}_{\C}}(\sh{M},\sh{D}_{\C})=0$ and $\sh{E}xt^1_{\sh{D}_{\C}}(\sh{M},\sh{D}_{\C}) \cong \sh{D}_{\C}/\sh{D}_{\C}p$.
With \eqref{eq:DXop} we conclude that $\Omega_{\C}\otimes_{\sh{O}_{\C}}\sh{D}_{\C} \rightarrow
\sh{D}_{\C} \otimes_{\sh{O}_{\C}}\Omega_{\C}$, $\diff x\otimes p \mapsto ^t p\otimes \diff x$,
is an isomorphism of left $\sh{D}_{\C}^{\op}$-modules and hence
\begin{align*}
\Dual\sh{M} =\Omega_{\C} \otimes_{\sh{O}_{\C}}\left(\sh{D}_{\C}/\sh{D}_{\C}p\right)
=  \sh{D}_{\C}\otimes_{\sh{O}_{\C}}\Omega_{\C} /  (^tp)\sh{D}_{\C}\otimes_{\sh{O}_{\C}}\Omega_{\C} \cong \sh{D}_{\C}/ ^tp\sh{D}_{\C}
\end{align*}
is an isomorphism of right $\sh{D}_{\C}$-modules. This formula for $\Dual \sh{M}$ will be applied later.
\end{Rem}

\subsection{Geometric setup}\label{ssec:geometricsetup}
In the remaining part of the text we will be concerned
with the following geometric setup. Let $x$ be a global coordinate on $\Proj^1$
considered as a variety.
Then $z = x^{-1}$ is another global coordinate. Consider the open subsets $\C_x = (x \neq \infty)$
and $\C_z = (z \neq \infty)$
of $\Proj^1$ and the corresponding open embeddings $j_{x,z}: \C_{x,z} \hookrightarrow \Proj^1$
and $j: \C^{\times}=\C_x\cap \C_z \hookrightarrow \Proj^1$. Thus, we have a commutative diagram of
open embeddings
\begin{align*}
\xymatrix{& \C_x \ar@{^{(}->}[dr] & \\
\C^{\times} \ar@{^{(}->}[ur] \ar@{^{(}->}[dr] &  & \Proj^1\\
& \C_y \ar@{^{(}->}[ur] & }\;.
\end{align*}
The closed embeddings of the complements will be denoted by $\iota_{x,z}$.
We will also work with homogeneous coordinates $[z_0:z_1]$ on $\Proj^1$
related to the coordinate $x$ by $x=z_1/z_0$.

\subsection{Direct images of $\Omega_{\C_{x,z}}$}\label{ssec:directimagesofOmegaCxz}
\emph{In the subsequent text we will only consider right $\sh{D}$-modules unless
stated otherwise.} First recall that the action of $\partial_x$
on $\Omega_{\C_x}(\C_x)=\C[x]\diff x$ is given by $(x^n \diff x)\partial_x = -nx^{n-1}\diff x$. 
Also recall that the sheaf direct image $j_{x\cdot}\Omega_{\C_x}$
of $\Omega_{\C_x}$ is naturally a $\sh{D}_{\Proj^1}$-module.
Let us describe it on the open affine cover $\{\C_x,\C_z\}$ of $\Proj^1$. 
We have $(j_{x\cdot}\Omega_{\C_x})\vert \C_z \cong \sh{D}_{\C_z}/z\partial_z \sh{D}_{\C_z}$
as 
\begin{align*}
(j_{x\cdot}\Omega_{\C_x})(\C_z)=\Omega_{\C_x}(\C_z\cap \C_x)=\C[x,x^{-1}]\diff x
=\C[z,z^{-1}]\diff z \xrightarrow{\cong} (\sh{D}_{\C_z}/z\partial_z \sh{D}_{\C_z})(\C_z)\;.
\end{align*}
The last isomorphism is given by $\frac{\diff z}{z} \mapsto \overline{1}$. Here
and in the following $\overline{a}$ will denote the class of an element $a$ in a quotient.
We have the exact sequence
\begin{align}\label{eq:sesonCz}
0 \rightarrow \Omega_{\C_z}=\sh{D}_{\C_z}/\partial_z\sh{D}_{\C_z}
\rightarrow (j_{x \cdot}\Omega_{\C_x})\vert \C_z = \sh{D}_{\C_z}/z\partial_z\sh{D}_{\C_z}
\rightarrow \sh{D}_{\C_z}/z\sh{D}_{\C_z} \rightarrow 0
\end{align}
of $\sh{D}_{\C_z}$-modules. 
The first map sends $\overline{1} \mapsto \overline{z}$.  
Trivially, $(j_{x\cdot}\Omega_{\C_x})\vert \C_x = \Omega_{\C_x}$.
Similarly, $(j_{z\cdot}\Omega_{\C_z})\vert \C_x \cong \sh{D}_{\C_x}/x\partial_x \sh{D}_{\C_x}$.

\begin{Rem} Let us recall the exact triangle from a pair of complementary closed and open embedding. 
Let $\iota: Z \hookrightarrow X$
be a closed immersion of a smooth variety $Z$ into a smooth variety $X$. Let $j: U=X\setminus Z \hookrightarrow X$
be the open embedding of the complement.
If $\sh{M}$ is an object of the bounded derived category of $\sh{D}_X$-modules
we have an exact triangle \cite{Ber}[p. 8]
\begin{align}\label{eq:exacttriangle}
\iota_* \iota^! \sh{M} \rightarrow \sh{M} \rightarrow j_{\cdot}j^{-1}\sh{M} \xrightarrow{[1]}\;.
\end{align}
We comment on the direct and inverse images involved: 
Recall that $j^{-1}$, the restriction to the open $U$, is exact.
$\Rder j_{\cdot} \equiv j_{\cdot}$ is the right derived direct image of sheaves.
In \eqref{eq:exacttriangle} $\sh{M} \rightarrow j_{\cdot}j^{-1}\sh{M}$ is the unit
of the adjunction. According to \cite{Ber}[p. 5] $\iota_*$ is exact and $\Rder \iota^! \equiv \iota^!$ is right derived
since $\iota$ is a closed embedding. $\iota_*$ is left adjoint to $\iota^!$ and 
in \eqref{eq:exacttriangle} $\iota_* \iota^! \sh{M} \rightarrow \sh{M}$
is the counit of the adjunction.
$\iota_* \iota^! = \Rder \Gamma_Z$ is the right derived functor of sections
supported in $Z$. Also recall from \cite{Kas00}[section 3.4] that if $\sh{M}=\Omega_X$ (placed in degree zero) and $Z$ is
a complete intersection of codimension $d$ in $X$ then
$\Ho^l(\Rder \Gamma_Z(\Omega_X))=0$ for $l \neq d$ and $\sh{B}_{Z \vert X}=\Ho^d(\Rder \Gamma_Z(\Omega_X))$
is the $\sh{D}$-module of distributions on $X$ with singularities along $Z$.\end{Rem}

Now we consider the above remark for $\iota=\iota_x$, $j=j_x$ and $\sh{M}=\Omega_{\Proj^1}$ (in degree zero).
We find $\iota_x^! \Omega_{\Proj^1} = \C[-1]$. (Clearly $\Omega_{\Proj^1}$ does not have
sections supported at $(x=\infty)$.) The long exact sequence associated to \eqref{eq:exacttriangle} is
\begin{align}\label{eq:sesforjx}
0 \rightarrow \Omega_{\Proj^1} \rightarrow j_{x\cdot}\Omega_{\C_x} \rightarrow \iota_{x*}\C \rightarrow 0 \rightarrow \dots\;.
\end{align}
The restriction of this sequence to the open $\C_z$ is \eqref{eq:sesonCz}. Here $\iota_{x*}\C = \Ho^1(\Rder \Gamma_{(x=\infty)}(\Omega_{\Proj^1}))=\sh{B}_{(x=\infty)\vert \Proj^1}$ is the skyscraper $\sh{D}$-module supported at $x=\infty$.

\subsubsection{Subquotients of $j_{x\cdot}\Omega_{\C_x}$} These are clear from \eqref{eq:sesforjx} and the fact that $\Omega_{\Proj^1}$ and $\iota_{x*}\C$ are simple $\sh{D}_{\Proj^1}$-modules.

\subsubsection{Subquotients of $j_{x!}\Omega_{\C_x}$} 
The direct image $j_{x!}$ is defined as $j_{x!}=\Dual j_{x\cdot}\Dual$,
see e.g. \cite{Ber}[p. 18]. Applying $\Dual$ to \eqref{eq:sesforjx} we find with 
$\Dual \Omega_{\Proj^1} \cong \Omega_{\Proj^1}$ and $\Dual \iota_{x*}\C \cong \iota_{x*}\C$ the exact sequence
\begin{align}\label{eq:jx!exactsequence}
0 \leftarrow \Omega_{\Proj^1} \leftarrow j_{x!}\Omega_{\C_x} \leftarrow  \iota_{x*}\C \leftarrow 0\;.
\end{align}
According to \eqref{eq:sesonCz} and Remark \autoref{Rem:formulaforDualM} this sequence restricts to
\begin{align}\label{eq:sesforjxshriek}
0 \leftarrow \Omega_{\C_z}=\sh{D}_{\C_z}/\partial_z \sh{D}_{\C_z} \leftarrow (j_{x!}\Omega_{\C_x})\vert \C_z =\sh{D}_{\C_z}/\partial_z z\sh{D}_{\C_z} \leftarrow \sh{D}_{\C_z}/z\sh{D}_{\C_z} \leftarrow 0
\end{align}
on $\C_z$. The first map (from the right) sends $\overline{1} \mapsto \overline{\partial_z}$.

\subsection{Definition of $\Omega^{(\alpha)}_{\C^{\times}}$}\label{ssec:defofOmegaalpha}
For $\alpha \in \C$ define $\Omega_{\C^{\times}}^{(\alpha)} = \sh{D}_{\C^{\times}}/(x\partial_x-\alpha)\sh{D}_{\C^{\times}}$.
This definition depends on the choice of the global coordinate $x$.
In the global coordinate $z=x^{-1}$ we find $\Omega_{\C^{\times}}^{(\alpha)}=\sh{D}_{\C^{\times}}/(z\partial_z+\alpha)\sh{D}_{\C^{\times}}$.
As a special case we have 
\begin{align*}
\Omega_{\C^{\times}}^{(-1)} = \sh{D}_{\C^{\times}}/\partial_x x\sh{D}_{\C^{\times}}
= \sh{D}_{\C^{\times}}/\partial_x \sh{D}_{\C^{\times}}= \Omega_{\C^{\times}}\;.
\end{align*}

\begin{Rem}\label{Rem:eltarypropertiesofOmegaalpha} We list elementary properties of $\Omega^{(\alpha)}_{\C^{\times}}$.
\begin{enumerate}
\item \emph{$\Omega^{(\alpha)}_{\C^{\times}}$ is a free $\sh{O}_{\C^{\times}}$-module of rank one.} Indeed,
$\sh{D}_{\C^{\times}}(\C^{\times})/(x\partial_x-\alpha)\sh{D}_{\C^{\times}}(\C^{\times})
= \bigoplus_{n \in \Z}\C\overline{1}x^n$. Since $(\overline{1}x^n) x\partial_x = (\alpha-n)\overline{1}x^n$ our module is a direct sum of one dimensional eigenspaces of $x\partial_x$ with eigenvalues $\alpha + \Z$.

\item \emph{$\Omega_{\C^{\times}}^{(\alpha)} \cong \Omega_{\C^{\times}}^{(\beta)}$
if and only if $\alpha-\beta \in \Z$.} From the eigenspace decomposition
for $x\partial_x$ described in (1) we find that $\alpha-\beta \notin \Z$
implies $\Omega_{\C^{\times}}^{(\alpha)} \ncong \Omega_{\C^{\times}}^{(\beta)}$. On the other hand, for $n \in \Z$
$\phi: \Omega_{\C^{\times}}^{(\alpha)} \rightarrow \Omega_{\C^{\times}}^{(\alpha+n)}$, $\overline{1} \mapsto \overline{1}x^n$, extends uniquely to a morphism of $\sh{D}_{\C^{\times}}$-modules because of $\phi(\overline{1}x\partial_x)=\phi(\overline{1})x\partial_x = \alpha \phi(\overline{1})$. Further, $\phi$ is invertible.

\item $\Dual \Omega^{(\alpha)}_{\C^{\times}} \cong \sh{H}om_{\sh{O}_{\C^{\times}}}
(\Omega^{(\alpha)}_{\C^{\times}},\Omega_{\C^{\times}})^r \cong \Omega^{(-\alpha)}_{\C^{\times}}$.
The first isomorphism is known to hold, as consequence of (1), see e.g. \cite{Kas00}[Lemma 3.13]. 
We now explain the second isomorphism. 
Recall that $\sh{H}om_{\sh{O}_{\C^{\times}}}
(\Omega^{(\alpha)}_{\C^{\times}},\Omega_{\C^{\times}})$ is a left $\sh{D}_{\C^{\times}}$-module
if we let the vector field $\xi \in \sh{D}_{\C^{\times}}$ act on a local section $\psi$ by
\begin{align}\label{eq:dualconnection}
(\xi \psi)(m)=-\psi(m)\xi+\psi(m\xi)\;,
\end{align}
see e.g. \cite{Bor87}[Chapter VI, 3.4].
This implies that the map between the right $\sh{D}_{\C^{\times}}$-modules
\begin{align*}
\Omega_{\C^{\times}}\otimes_{\sh{O}_{\C^{\times}}}\sh{H}om_{\sh{O}_{\C^{\times}}}(\Omega^{(\alpha)}_{\C^{\times}},\Omega_{\C^{\times}})
\rightarrow \Omega_{\C^{\times}},\; \diff x\otimes \psi \mapsto \psi(\overline{1})\;,
\end{align*}
which is an isomorphism of $\sh{O}_{\C^{\times}}$-modules, sends 
\begin{align*}
(\diff x\otimes \psi)x\partial_x\; \mapsto\; \psi(\overline{1})(-1-\alpha+x\partial_x)\;,
\end{align*}
where we applied \eqref{eq:dualconnection}. 
Hence, the global sections of $\sh{H}om_{\sh{O}_{\C^{\times}}}(\Omega^{(\alpha)}_{\C^{\times}},\Omega_{\C^{\times}})^r$
decompose into a direct sum of one dimensional eigenspaces for the eigenvalues $-\alpha+\Z$ for the action of $x\partial_x$.
Hence, by (2), the second isomorphism.  Also note that $\Dual \Omega^{(\alpha)}_{\C^{\times}}
\cong \Omega^{(-\alpha)}_{\C^{\times}}$ follows directly from remark \autoref{Rem:formulaforDualM} and (2)
as $^t(x\partial_x-\alpha)=-x\partial_x-\alpha-1$.
\end{enumerate}
\end{Rem}

\subsection{Direct images of $\Omega^{(\alpha)}_{\C^{\times}}$}\label{ssec:directimagesofOmegaalpha}
Let us describe the direct images $j_{\cdot}$
and $j_!$ of $\Omega^{(\alpha)}_{\C^{\times}}$.
In addition, we will consider a functor $j_{!x\cdot z}: \modulecat\sh{D}_{\C^{\times}}
\rightarrow \modulecat\sh{D}_{\Proj^1}$ defined on objects as follows.
Let $\sh{M}$ be a $\sh{D}_{\C^{\times}}$-module. We glue the $\sh{D}_{\C_x}$-module
$j_x^{-1}j_!\sh{M}$ and the $\sh{D}_{\C_z}$-module $j_z^{-1}j_{\cdot}\sh{M}$ along $\C^{\times}$ to 
a $\sh{D}_{\Proj^1}$-module $j_{!x \cdot z}\sh{M}$. Thus, informally $j_{!x\cdot z}$ can be described
as the $!$-extension at $x=0$ and the $\cdot$-extension at $z=0$. 
Of course we have a similar functor $j_{\cdot x!z}$ satisfying $j_{\cdot x !z}=\Dual j_{!x\cdot z} \Dual$. 

\begin{Lemma}\label{Lemma:alphanotanintegerjdotsimple}
Let $\alpha \notin \Z$.
\begin{enumerate}
\item $j_{\cdot}\Omega^{(\alpha)}_{\C^{\times}}$ is simple. The restriction to $\C_x$ is
\begin{align}\label{eq:jdotOmegaalphaonCx}
j_{\cdot}\Omega^{(\alpha)}_{\C^{\times}}\vert \C_x \cong \sh{D}_{\C_x}/(x\partial_x-\alpha)\sh{D}_{\C_x}\;.
\end{align} The canonical
map $j_! \Omega^{(\alpha)}_{\C^{\times}} \rightarrow j_{\cdot}\Omega^{(\alpha)}_{\C^{\times}}$, see e.g. \cite{Ber}[p. 18],
is an isomorphism. 
\item $j_{!x\cdot z}\Omega^{(\alpha)}_{\C^{\times}} \cong j_{\cdot}\Omega^{(\alpha)}_{\C^{\times}}$
\end{enumerate}
\end{Lemma}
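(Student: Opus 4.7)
The plan is to reduce everything to an explicit description of $j_{\cdot}\Omega^{(\alpha)}_{\C^{\times}}$ on the affine chart $\C_x$, from which simplicity follows by a direct weight argument, the statement about the canonical map follows by invoking $\Dual$, and (2) then drops out from the definition of $j_{!x\cdot z}$.

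\textbf{Step 1 (restriction to $\C_x$).} First I would construct the natural morphism of right $\sh{D}_{\C_x}$-modules
\[
\sh{D}_{\C_x}/(x\partial_x - \alpha)\sh{D}_{\C_x} \longrightarrow (j_{\cdot}\Omega^{(\alpha)}_{\C^{\times}})\vert \C_x,\qquad \overline{1}\mapsto \overline{1},
\]
well defined because $\overline{1}(x\partial_x-\alpha)=0$ on both sides. Since $\C_x$ is affine, verifying that this is an isomorphism reduces to global sections. The right-hand side equals $\Omega^{(\alpha)}_{\C^{\times}}(\C^{\times}) = \bigoplus_{n\in\Z}\C\,\overline{1}x^n$ by \autoref{Rem:eltarypropertiesofOmegaalpha}(1), while a short commutator computation in the Weyl algebra identifies the left-hand side with $\bigoplus_{j\geq 0}\C\,\overline{1}x^j \oplus \bigoplus_{k\geq 1}\C\,\overline{1}\partial_x^k$, and $x\partial_x$ acts on these by the distinct eigenvalues $\alpha-j$ resp. $\alpha+k$ because $\alpha \notin \Z$. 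An induction using $[\partial_x, x^n]=nx^{n-1}$ gives $\overline{1}\partial_x^k \mapsto \prod_{i=1}^{k}(\alpha+i)\cdot \overline{1}x^{-k}$, nonzero precisely because $\alpha \notin \Z$, so the map is an iso on every weight space, hence globally.

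\textbf{Step 2 (simplicity and the canonical map).} Simplicity of $j_{\cdot}\Omega^{(\alpha)}_{\C^{\times}}$ is checked chart by chart. On $\C_x$ the explicit presentation from Step 1 shows that the right action of $x$ and $\partial_x$ connects consecutive weight eigenvectors with coefficients $\alpha+k$ resp. $\alpha+1-j$, all nonzero under our hypothesis, so every nonzero element generates the whole module; the analogous statement holds on $\C_z$ (with $\alpha$ replaced by $-\alpha$). A nonzero submodule $\sh{N}\subset j_{\cdot}\Omega^{(\alpha)}_{\C^{\times}}$ therefore restricts on each chart to either $0$ or the full module, and the consistency requirement on $\C^{\times}$ (where the two chart restrictions must agree) rules out the mixed case, forcing $\sh{N}=j_{\cdot}\Omega^{(\alpha)}_{\C^{\times}}$. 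For the canonical map I would use $j_!=\Dual\, j_{\cdot}\,\Dual$ together with $\Dual\Omega^{(\alpha)}_{\C^{\times}}\cong \Omega^{(-\alpha)}_{\C^{\times}}$ from \autoref{Rem:eltarypropertiesofOmegaalpha}(3) and the fact that $-\alpha\notin\Z$ as well; this yields simplicity of $j_!\Omega^{(\alpha)}_{\C^{\times}}$. The canonical morphism restricts to the identity on $\C^{\times}$, hence is nonzero, and a nonzero morphism between two simple objects is an isomorphism.

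\textbf{Step 3 (statement (2) and main obstacle).} Given (1), statement (2) is immediate: by definition $j_{!x\cdot z}\Omega^{(\alpha)}_{\C^{\times}}$ is glued from $j_x^{-1}j_!\Omega^{(\alpha)}_{\C^{\times}}$ and $j_z^{-1}j_{\cdot}\Omega^{(\alpha)}_{\C^{\times}}$ along $\C^{\times}$, and inserting the iso $j_!\Omega^{(\alpha)}_{\C^{\times}} \cong j_{\cdot}\Omega^{(\alpha)}_{\C^{\times}}$ from (1) reduces the construction to the standard reconstruction of $j_{\cdot}\Omega^{(\alpha)}_{\C^{\times}}$ from its restrictions to the cover $\{\C_x,\C_z\}$. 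The only mildly delicate point in the whole argument is the weight-space bookkeeping in Step 1: one has to track the commutator identities in the Weyl algebra carefully enough to confirm that the natural map is an honest isomorphism (not merely a matching of distinguished bases), which is exactly where the hypothesis $\alpha \notin \Z$ enters decisively; everything else is formal.
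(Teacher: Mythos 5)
Your argument is correct and follows essentially the same route as the paper: decomposition into one-dimensional $x\partial_x$-weight spaces with $x$ and $\partial_x$ acting invertibly between adjacent ones precisely because $\alpha\notin\Z$, holonomic duality to handle $j_!$, the fact that the canonical map is nonzero between simples, and gluing over $\{\C_x,\C_z\}$ for (2). The only differences are cosmetic: you verify the restriction isomorphism \eqref{eq:jdotOmegaalphaonCx} by an explicit Weyl-algebra computation where the paper simply declares it clear, and you obtain simplicity of $j_!\Omega^{(\alpha)}_{\C^{\times}}$ abstractly from $\Dual$ and Remark \autoref{Rem:eltarypropertiesofOmegaalpha}(3) rather than by computing its chart restrictions via Remark \autoref{Rem:formulaforDualM}.
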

\begin{proof}
(1). We have the decomposition 
\begin{align*}
j_{\cdot}\Omega^{(\alpha)}_{\C^{\times}}(\C_x)=\bigoplus_{n \in \Z}\left(j_{\cdot}\Omega^{(\alpha)}_{\C^{\times}}(\C_x)\right)_{\alpha+n}
=\bigoplus_{n \in \Z}\left(\Omega^{(\alpha)}_{\C^{\times}}(\C^{\times})\right)_{\alpha+n}
\end{align*} 
into one dimensional eigenspaces for $x\partial_x$ and $x$ and $\partial_x$ intertwine those:
\begin{align*}
\xymatrix{ \left(\Omega^{(\alpha)}_{\C^{\times}}(\C^{\times})\right)_{\alpha+n+1} \ar@<5pt>[r]^{\cdot x}
& \ar[l]^{\cdot \partial_x} \left(\Omega^{(\alpha)}_{\C^{\times}}(\C^{\times})\right)_{\alpha+n}}\;.
\end{align*}
As  
\begin{align*}
\cdot x\partial_x =(\alpha+n+1)\id_{\left(\Omega^{(\alpha)}_{\C^{\times}}(\C^{\times})\right)_{\alpha+n+1}}
\qquad \cdot\partial_x x = (\alpha+n+1)\id_{\left(\Omega^{(\alpha)}_{\C^{\times}}(\C^{\times})\right)_{\alpha+n}}
\end{align*}
the maps $x$ and $\partial_x$ are invertible for each $n$ if $\alpha \notin \Z$.
Let $\sh{M}$ be a submodule of $j_{\cdot}\Omega^{(\alpha)}_{\C^{\times}}$.
Let $s \in \sh{M}(\C_x)$. Writing $s = \sum_{j=1}^n s_j$ with
$s_j x\partial_x = \lambda_j s_j$ and distinct $\lambda_j$ we conclude $s_j \in \sh{M}(\C_x)$ by considering $s(x\partial_x)^l$,
$0 \leq l \leq n-1$. Thus $\sh{M}(\C_x)=\bigoplus_{n \in \Z}\sh{M}(\C_x)_{\alpha+n}$.
$x$ and $\partial_x$ restrict to isomorphisms between $\sh{M}(\C_x)_{\alpha+n+1}$
and $\sh{M}(\C_x)_{\alpha+n}$. It follows $\sh{M}(\C_x)=0$
or $\sh{M}(\C_x)=j_{\cdot}\Omega^{(\alpha)}_{\C^{\times}}(\C_x)$. Repeating the
argument for $\C_z$ we find $\sh{M} = 0$ or $\sh{M}=j_{\cdot}\Omega^{(\alpha)}_{\C^{\times}}$. 
We have shown that $j_{\cdot}\Omega^{(\alpha)}_{\C^{\times}}$ is simple. Further it is clear 
that $j_{\cdot}\Omega^{(\alpha)}_{\C^{\times}}\vert \C_x \cong \sh{D}_{\C_x}/(x\partial_x-\alpha)\sh{D}_{\C_x}$.
It follows with $j_!=\Dual j_{\cdot}\Dual$, remark \autoref{Rem:formulaforDualM} and remark \autoref{Rem:eltarypropertiesofOmegaalpha}
(2) that $j_!\Omega^{(\alpha)}_{\C^{\times}}\vert \C_x \cong \sh{D}_{\C_x}/(x\partial_x-\alpha)\sh{D}_{\C_x}$.
Thus $j_!\Omega^{(\alpha)}\cong j_{\cdot}\Omega^{(\alpha)}$.
Recall that the canonical map $\phi: j_! \Omega^{(\alpha)}_{\C^{\times}} \rightarrow j_{\cdot}\Omega^{(\alpha)}_{\C^{\times}}$ restricts
to the identity on $\C^{\times}$. Thus $\ker\phi$ is supported on $(x=0)\sqcup (z=0)$. 
Since we know that $j_!\Omega^{(\alpha)}$ is simple it follows $\ker\phi=0$. Because $j_!\Omega^{(\alpha)}\cong j_{\cdot}\Omega^{(\alpha)}$
it follows that $\phi$ also surjects.\newline
(2). By definition we have $j_{!x\cdot z}\Omega^{(\alpha)}_{\C^{\times}}\vert \C_z \cong j_{\cdot}\Omega^{(\alpha)}_{\C^{\times}}\vert \C_z$
and $j_{!x\cdot z}\Omega^{(\alpha)}_{\C^{\times}} \vert \C_x \cong j_! \Omega^{(\alpha)}_{\C^{\times}}\vert \C_x$ restricting to the identity of $\Omega^{(\alpha)}_{\C^{\times}}$ on $\C^{\times}$. Further, by (1)
we have an isomorphism $j_!\Omega^{(\alpha)}_{\C^{\times}}\vert \C_x \xrightarrow{\cong} j_{\cdot}\Omega^{(\alpha)}_{\C^{\times}}\vert \C_x$
restricting to the identity of $\Omega^{(\alpha)}_{\C^{\times}}$ on $\C^{\times}$. The assertion follows. 
\end{proof}

As a consequence of this lemma we are left with determining the subquotients of the
direct images of $\Omega_{\C^{\times}}$.

\subsubsection{Subquotients of $j_{\cdot}\Omega_{\C^{\times}}$
and $j_!\Omega_{\C^{\times}}$}\label{sssec:subquotientsjdotandj!}
$j_{\cdot}\Omega_{\C^{\times}}$ fits into the exact sequence
\begin{align}\label{eq:sesforjdotOmegaCtimes}
0 \rightarrow j_{x\cdot}\Omega_{\C_x} \rightarrow j_{\cdot}\Omega_{\C^{\times}} \rightarrow \iota_{z*}\C \rightarrow 0
\end{align}
and into the one obtained from it by interchanging $x$ and $z$. Combining this with \eqref{eq:sesforjx} we obtain a diagram of embeddings $A \hookrightarrow B$ such that $B/A$ is simple
\begin{align*}
\xymatrix{ & j_{\cdot}\Omega_{\C^{\times}} & \\ j_{x\cdot}\Omega_{\C_x} \ar@{^{(}->}[ur] & & j_{z\cdot}\Omega_{\C_z} \ar@{_{(}->}[ul]\\
& \ar@{^{(}->}[ul] \Omega_{\Proj^1} \ar@{_{(}->}[ur]&}\;.
\end{align*}
Thus the simple objects $\Omega_{\Proj^1}$, $\iota_{x*}\C$ and $\iota_{z*}\C$ each have multiplicity one in $j_{\cdot}\Omega_{\C^{\times}}$. This describes the subquotient structure of $j_{\cdot}\Omega_{\C^{\times}}$. 
Applying $\Dual$ to \eqref{eq:sesforjdotOmegaCtimes} yields
\begin{align}\label{eq:sesforjshriekOmegaCtimes}
0 \leftarrow j_{x!}\Omega_{\C_x} \leftarrow j_! \Omega_{\C^{\times}} \leftarrow \iota_{z*}\C \leftarrow 0\;.
\end{align}
Combining this with \eqref{eq:jx!exactsequence} we see that the diagram of embeddings
\begin{align*}
\xymatrix{ & j_! \Omega_{\C^{\times}} & \\
& \ar@{_{(}->}[u] \iota_{x*}\C\oplus \iota_{z*}\C & \\
\iota_{x*}\C \ar@{^{(}->}[ur] & & \iota_{z*}\C \ar@{_{(}->}[ul] }
\end{align*}
has simple quotients. 

\subsubsection{Subquotients of $j_{!x\cdot z}\Omega_{\C^{\times}}$}
From the definition of $j_{!x\cdot z}\Omega_{\C^{\times}}$ and \autoref{sssec:subquotientsjdotandj!} we find the exact sequences
\begin{align}\label{eq:sesforjdotx!zOmegaCtimes}
\begin{split}
& 0 \rightarrow j_{z!}\Omega_{\C_z} \rightarrow j_{!x\cdot z}\Omega_{\C^{\times}} \rightarrow \iota_{x*}\C \rightarrow 0\\
& 0 \rightarrow \iota_{z*}\C \rightarrow j_{!x\cdot z}\Omega_{\C^{\times}} \rightarrow j_{x\cdot}\Omega_{\C_x} \rightarrow 0\;.
\end{split}
\end{align}
We have a diagram of embeddings
\begin{align*}
\iota_{z*}\C \hookrightarrow j_{z!}\Omega_{\C_z} \hookrightarrow j_{!x\cdot z}\Omega_{\C^{\times}}
\end{align*}
with simple quotients. Again, $\Omega_{\Proj^1}$, $\iota_{x*}\C$ and $\iota_{z*}\C$ each have multiplicity one in $j_{!x\cdot z}\Omega_{\C^{\times}}$.

\subsection{$\Gm$-Equivariance}\label{ssec:Gmequivariance} The multiplicative group $\Gm$ acts on $\Proj^1$ 
by $t [z_0 : z_1] = [t z_0 : t^{-1} z_1]$. 
The orbits are $(x=0)$, $(z=0)$ and $\C^{\times}$. We describe the
equivariance of the constructed $\sh{D}$-modules w.r.t. this action
in the sense of \cite{BB93}[section 1.8]. We follow the terminology of \cite{Gai05}
and remark that in \cite{Kas08} weak equivariance is called quasi-equivariance. To this end we note
that if $Y=\C^{\times}, \C_x$ or $\Proj^1$ then $\sh{D}_Y$
is naturally a weakly $\Gm$-equivariant $\sh{D}_Y$-module.
If $\sh{I} \subseteq \sh{D}_Y$ is a sheaf of right ideals,
then $\sh{D}_Y / \sh{I}$ has an induced weakly equivariant structure
if and only if $t\sh{I} \subseteq \sh{I}$ for all $t \in \Gm$. 
Also $\Omega_Y$ has a natural strongly $\Gm$-equivariant structure. Let $\sh{M}$ 
be any holonomic $\sh{D}_Y$-module. 
\subsubsection{Equivariance of the holonomic dual}\label{ssec:equivarianceofholonomicdual}
It is shown, in greater generality, in \cite{RS13}[Proposition 2.18]
that there is an induced weakly $\Gm$-equivariant structure
on $\Dual\sh{M}$ for any weakly $\Gm$-equivariant holonomic
$\sh{D}_Y$-module $\sh{M}$. 
Further, it is stated in \cite{BB93}[section 2.5.8] that $\Dual$ is
a duality on the category of strongly $\Gm$-equivariant holonomic $\sh{D}$-modules
(put $H=1$ in loc. cit.).  

\subsubsection{Equivariance of $\Omega^{(\alpha)}_{\C^{\times}}$}
Consider e.g. $Y=\C^{\times}$.
From the above we conclude that $\Omega^{(\alpha)}_{\C^{\times}}$  is weakly $\Gm$-equivariant
and for $\alpha=0$ strongly $\Gm$-equivariant. The isomorphism 
\begin{align*}
\Omega_{\C^{\times}} \xrightarrow{\cong} \Omega^{(0)}_{\C^{\times}}=\sh{D}_{\C^{\times}}/x\partial_x \sh{D}_{\C^{\times}},\;
\frac{\diff x}{x} \mapsto \overline{1}\;,
\end{align*}
respects the equivariant structures. As the $\Gm$-equivariant irreducible
local systems on $\C^{\times}$ are in bijection with irreducible representations
of the stabilizer $\{\pm 1\}$ of the $\Gm$-action at any point of $\C^{\times}$ \cite{BB81} we in fact also have a
strongly $\Gm$-equivariant structure on $\Omega^{(1/2+n)}_{\C^{\times}}$, $n \in \Z$. 
(The latter is however not induced by the weakly equivariant structure on $\sh{D}_{\C^{\times}}$.)

\subsubsection{Equivariance of direct images}\label{ssec:equivstructureondirectimage}
The direct image $j_{\cdot}\sh{M}$ has an induced
weakly resp. strongly $\Gm$-equivariant structure if 
$\sh{M}$ has a weakly resp. strongly $\Gm$-equivariant structure according to \cite{Kas08}[section 3.5]
resp. \cite{BB81}. By the same token $j_{x!}\Omega_{\C_x}$ and $\iota_{x*}\C$ have
induced strongly $\Gm$-equivariant structures.  
By \autoref{ssec:equivarianceofholonomicdual} we obtain a
weakly or strongly $\Gm$-equivariant structure on $j_! \sh{M}$
and a strongly $\Gm$-equivariant structure on $j_{x!}\Omega_{\C_x}$.
Finally, we also have a weakly or strongly $\Gm$-equivariant structure on $j_{!x\cdot z}\sh{M}$. 

\section{Generalities on $\mathfrak{sl}_2$-modules}\label{sec:generalitiesonsl2modules}
The standard basis of $\mathfrak{sl}_2$ will be denoted
by $e,f,h$. For $\lambda \in \C$ we have the usual $\mathfrak{sl}_2$-module $\MVerma(\lambda)$,
the \emph{Verma module of highest weight $\lambda$}, and $\Lsimple(\lambda)$,
the \emph{simple $\mathfrak{sl}_2$-module of highest weight $\lambda$}. In accordance 
with our convention for $\sh{D}$-modules we will however work with
the corresponding right $\mathfrak{sl}_2$-modules obtained from them via the anti-involution of $\mathfrak{sl}_2$
given by $e \mapsto e$, $f \mapsto f$, $h \mapsto -h$. Thus $\MVerma(\lambda)$ will have $h$-weights
bounded from below, namely $-\lambda + 2\Z_{\geq 0}$. Below, the notation $\Lsimple(\lambda)$ will
only be used when $\Lsimple(\lambda)$ is finite dimensional, i.e. in the case $\lambda \in \Z_{\geq 0}$. 
More generally, we consider \emph{weight modules} for $\mathfrak{sl}_2$. These are
$\mathfrak{sl}_2$-modules $M$ such that $M = \bigoplus_{\lambda \in \C}M_{\lambda}$,
where $M_{\lambda} = \{v \in M\; \vert\; v h = \lambda v\}$ is the \emph{($h$-)weight space
for the weight $\lambda$}. 
Let $T \subseteq \SL_2$ be the diagonal matrices.  

\begin{Rem}\label{Rem:weightmodulesareintegrable}
Let $M$ be a $\mathfrak{sl}_2$-module. Then the $\Lie T = \C h$-action on $M$ integrates to a (algebraic) $T$-action on $M$ if
and only if $M$ is a weight module with weights in $\Z$, see e.g. \cite{Jan03}[I, section 2.11]. 
\end{Rem}

\subsection{Duality $(\cdot)^{\vee}$}\label{ssec:duality}
If $\dim M_{\lambda} < \infty$ for all $\lambda$ we define the \emph{dual of $M$}
 as $M^{\vee}=\bigoplus_{\lambda \in \C}\Hom_{\C}(M_{\lambda},\C)$ by
letting $X \in \mathfrak{sl}_2$ act on $\phi \in M^{\vee}$ by $(\phi X)(v)=\phi(v\tau(X))$. Here $\tau$ is the anti-involution 
of $\mathfrak{sl}_2$ defined by $h \mapsto h$, $e \mapsto f$, $f \mapsto e$. 
Then $M^{\vee}$ is again a weight module and $(M^{\vee})_{\lambda}=\Hom_{\C}(M_{\lambda},\C)$. Thus, if $M$ is e.g. a highest weight module,
then so is $M^{\vee}$. We record for later use that $\MVerma(\lambda) \cong \MVerma(\lambda)^{\vee}$ if and only
if $\lambda \notin \Z_{\geq 0}$. 

\section{$\Ho^0$ and $\Ho^1$}\label{sec:H0andH1}
Let us abbreviate the sheaf cohomology groups $\Ho^k(\Proj^1,\cdot)$ by $\Ho^k(\cdot)$.
Let us from now on drop the subscripts $\Proj^1$ in $\sh{D}_{\Proj^1}$, $\sh{O}_{\Proj^1}$, $\Omega_{\Proj^1}$, etc. to lighten notation.
Consider the $\SL_2$-action on $\Proj^1$ given by $\begin{pmatrix} a & b \\ c & d\end{pmatrix}
[z_0:z_1] = [az_0+bz_1: cz_0+dz_1]$. (The action of the subgroup $T \cong \Gm$ coincides with the one considered
in \autoref{ssec:Gmequivariance}.) It induces an isomorphism of Lie algebras
$\mathfrak{sl}_2 \xrightarrow{\cong} \Ho^0(\sh{T})$,
where $\sh{T}$ is the tangent sheaf of $\Proj^1$, that maps the standard
basis 
\begin{align}\label{eq:vectorfieldssl2}
e \mapsto \partial_x = -z^2\partial_z,\; h \mapsto -2x\partial_x = 2z\partial_z,\; f \mapsto -x^2\partial_x = \partial_z\;.
\end{align} 
($=$ here abbreviates equality after restriction to $\C^{\times}$ and the
global section of $\sh{T}$ is of course obtained by gluing.)
It is known \cite{BB81} that this 
induces an isomorphism $(\Uea\mathfrak{sl}_2)_0 := \Uea\mathfrak{sl}_2/(c) \xrightarrow{\cong} \Ho^0(\sh{D})$ of
algebras. Here $\Uea\mathfrak{sl_2}$ is the universal enveloping algebra of $\mathfrak{sl}_2$ and
$c$ is the Casimir central element $c= ef+fe +\frac{h^2}{2}$. Further, we denote here and
below by $(\cdot)$ the right ideal generated by $\cdot$ in an associative algebra. 
Thus, if $\sh{M}$ is a (right) $\sh{D}$-module, then $\Ho^0(\sh{M})$ 
is a right module over $(\Uea\mathfrak{sl}_2)_0$. 
Recall that we may compute $\Ho^k(\sh{F})$ for any quasicoherent $\sh{O}$-module $\sh{F}$ 
by the Cech complex of an open affine cover \cite{Har77}[III, Theorem 4.5]. For
the cover $\{\C_x,\C_z\}$ the Cech complex is 
\begin{align}\label{eq:Cechcomplex}
\sh{F}(\C_x)\oplus \sh{F}(\C_z) \rightarrow \sh{F}(\C^{\times}),\; (s_1,s_2) \mapsto
s_1\vert \C^{\times} -s_2\vert \C^{\times}\;,
\end{align}
under the usual identification $\C_x\cap \C_z = \C^{\times}$.
If $\sh{F}$ in addition has the structure of a $\sh{D}$-module, then \eqref{eq:Cechcomplex} is $\Ho^0$
of a morphism of $\sh{D}$-modules
\begin{align}\label{eq:sheafCechcomplex}
\mathcal{C}^0(\sh{F}) :=j_{x\cdot}j_x^{-1}\sh{F}\oplus j_{z\cdot}j^{-1}_z\sh{F} \rightarrow \mathcal{C}^1(\sh{F}) :=j_{\cdot}j^{-1}\sh{F}\;.
\end{align}
Hence $\Ho^1(\sh{F})$ is a $\Ho^0(\sh{D})$-module. 
E.g. for $\sh{F}=\Omega$ \eqref{eq:Cechcomplex} is
\begin{align*}
\C[x]\diff x \oplus \C[z]\diff z \rightarrow \C[x,x^{-1}]\diff x,\; 
(x^n\diff x, z^m\diff z) \mapsto (x^n+x^{-m-2})\diff x\;.
\end{align*}
Thus 
\begin{align}\label{eq:H0andH1Omega}
\Ho^0(\Omega)=0\quad \Ho^1(\Omega)=\C x^{-1}\diff x \cong \Lsimple(0)\;.
\end{align}
This calculation can also be found in \cite{Har77}[III, Example 4.0.3]. 
Similarly we find
\begin{align}\label{eq:H0andH1jdot}
& \Ho^0(j_{x\cdot}\Omega_{\C_x})=\Omega_{\C_x}(\C_x)\cong\MVerma(-2) \quad \Ho^1(j_{x\cdot}\Omega_{\C_x})=0\;.
\end{align}
Further, we find with $\iota_{x*}\C \vert \C_z = \sh{D}_{\C_z}/z\sh{D}_{\C_z}$ 
\begin{align}\label{eq:H0andH1istar}
\Ho^0(\iota_{x*}\C)=\C[\overline{\partial_z}] \cong \MVerma(0)\quad \Ho^1(\iota_{x*}\C)=0\;.
\end{align}
Further, we find with $j_{x!}\Omega_{\C_x}\vert \C_z=\sh{D}_{\C_z}/\partial_z z\sh{D}_{\C_z}$, see \eqref{eq:sesforjxshriek},
\begin{align}\label{eq:H0andH1jshriek}
& \Ho^0(j_{x!}\Omega_{\C_x})=\overline{\partial_z} \C[\overline{\partial_z}]\cong \MVerma(0)\quad \Ho^1(j_{x!}\Omega_{\C_x})=\C x^{-1}\diff x\cong \Lsimple(0)\;.
\end{align}

We will denote
by $(\Uea\mathfrak{sl}_2)_0\modulecat$ and $\modulecat(\Uea\mathfrak{sl}_2)_0$ the
category of (not necessarily finitely generated) left resp. right $(\Uea\mathfrak{sl}_2)_0$-modules. 

\subsection{Auto-Equivalences $(\cdot)^-$}\label{ssec:autoequivalences-}
Before proceeding in the computation of
the cohomology it is convenient to write out the operations
induced by exchanging the coordinate $x$ with $z$.
Consider the involution $I$ of $\Proj^1$ given by $x \mapsto x^{-1}$.
If $\sh{M}$ is $\sh{D}$-module, we define $\sh{M}^-$ as the direct image by $I$,
$\sh{M}^- = I_*\sh{M} = I_{\cdot}(\sh{M}\otimes_{\sh{D}}\sh{D}_I)$. Here $\sh{D}_I = \sh{O}
\otimes_{I^{-1}\sh{O}} I^{-1}\sh{D}$
is the transfer $\sh{D}$-$I^{-1}\sh{D}$-bimodule, see e.g. \cite{Kas00}[section 4.1]. Since $I$ is a proper morphism, $(\cdot)^-$ commutes with $\Dual$,
see \cite{Ber}[p. 19].
\begin{Rem}\label{Rem:MotimesRFsheaf}
Let $\sh{R}$ be a sheaf of $\C$-algebras on a topological space $X$. Let $\sh{M}$ be a right $\sh{R}$-module.
Let $\sh{F}$ be a free (not just locally free) left $\sh{R}$-module of rank one. For $U \subseteq X$ open we have isomorphisms of vector spaces
\begin{align*}
\sh{M}(U)\otimes_{\sh{R}(U)}\sh{F}(U) \cong \sh{M}(U)
\end{align*}
compatible with the restriction maps. Thus, the sheafification maps $\sh{M}(U)\otimes_{\sh{R}(U)}\sh{F}(U) \rightarrow (\sh{M}\otimes_{\sh{R}}\sh{F})(U)$, which we have by definition of $\sh{M}\otimes_{\sh{R}}\sh{F}$,
are isomorphisms. 
\end{Rem}

\begin{Lemma}\label{Lemma:Itransferbimodule}
Let $U \subseteq \Proj^1$ be open. The left action of $\sh{D}$ on $\sh{D}_I$ induces an isomorphism
\begin{align*}
l:\sh{D}(U) \xrightarrow{\cong} \sh{O}(U)\otimes_{(I^{-1}\sh{O})(U)} (I^{-1}\sh{D})(U) \xrightarrow{\cong} \sh{D}_I(U)\;,
\end{align*}
where the first map is $p \mapsto p(1 \otimes 1)$.  
The right action of $I^{-1}\sh{D}$ on $\sh{D}_I$ induces
an isomorphism
\begin{align*}
r: \sh{D}(I(U)) \xrightarrow{\cong}  \sh{O}(U)\otimes_{(I^{-1}\sh{O})(U)} (I^{-1}\sh{D})(U) \xrightarrow{\cong} \sh{D}_I(U)\;,
\end{align*}
where the first map is $p \mapsto 1\otimes p$. The compositions $r^{-1}l$ define an isomorphism of sheaves
of algebras $J: \sh{D} \xrightarrow{\cong} I^{-1}\sh{D}$ such that $(I^{-1}J)J=\id_{\sh{D}}$.
\end{Lemma}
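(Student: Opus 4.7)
The plan is to reduce everything to explicit local coordinates, exploiting that $I$ is an automorphism of $\Proj^1$ (not merely an \'etale morphism). First I would observe that since $I$ is an isomorphism, the structure map $I^{\#}: I^{-1}\sh{O} \to \sh{O}$ is an isomorphism of sheaves of $\C$-algebras, making $\sh{O}$ a free $I^{-1}\sh{O}$-module of rank one with generator $1$. By Remark \ref{Rem:MotimesRFsheaf} no sheafification is needed in the definition of $\sh{D}_I$:
\[
\sh{D}_I(U) = \sh{O}(U) \otimes_{(I^{-1}\sh{O})(U)} (I^{-1}\sh{D})(U)
\]
for every open $U \subseteq \Proj^1$; moreover $(I^{-1}\sh{D})(U) = \sh{D}(I(U))$ since $I$ is also a homeomorphism. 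The isomorphism $r$ is then immediate: as $1 \in \sh{O}(U)$ is a free generator, $P \mapsto 1 \otimes P$ is a bijection and is manifestly right $(I^{-1}\sh{D})(U)$-linear.

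Next I would verify that $l$ is an isomorphism by computing it on algebra generators in local coordinates. Pick $U$ small enough to lie in one affine chart, with coordinate $x$ on $U$ and corresponding coordinate $z$ on $I(U)$ satisfying $I^{\#}(z) = x$. Using $r$ to identify $\sh{D}_I(U)$ with $\sh{D}(I(U))$, I compute
\begin{align*}
l(x) &= x \cdot (1 \otimes 1) = I^{\#}(z) \otimes 1 = 1 \otimes z = r(z),\\
l(\partial_x) &= \partial_x \cdot (1 \otimes 1) = 1 \otimes \partial_z = r(\partial_z),
\end{align*}
the first line by the defining tensor relation, and the second by the standard formula $\xi \cdot (f \otimes P) = \xi(f) \otimes P + f \otimes dI(\xi) \cdot P$ for the left $\sh{D}$-action on the transfer bimodule, together with $dI(\partial_x) = \partial_z$ (which follows from $I^{\#}(z) = x$ by the chain rule). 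Hence $J = r^{-1} l$ locally sends $x \mapsto z$ and $\partial_x \mapsto \partial_z$; i.e., $J$ is the canonical identification of Weyl algebras induced by $I$. This shows both that $l$ is an isomorphism of sheaves and that $J$ is a sheaf-of-algebras isomorphism. The ring-homomorphism property of $J$ can also be verified globally from the identity $p \cdot r(Q) = r(J(p) \cdot Q)$ for $p \in \sh{D}(U)$ and $Q \in \sh{D}(I(U))$, which follows from the commuting left- and right-actions on the bimodule $\sh{D}_I$ together with $l(p) = r(J(p))$.

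Finally, the involutivity $(I^{-1}J) J = \id_{\sh{D}}$ follows by symmetry: on sections, $(I^{-1}J)(U)$ is the map $J(I(U)): \sh{D}(I(U)) \to \sh{D}(U)$, which by the same local computation (with the roles of $x$ and $z$ swapped) sends $z \mapsto x$ and $\partial_z \mapsto \partial_x$. Composing with $J$ gives the identity on the generators $x, \partial_x$, hence the identity. The main obstacle will be pinning down the precise formula for the left $\sh{D}$-action on $\sh{D}_I$ and checking its well-definedness with respect to the tensor relations; this simplifies substantially here because $I$ is an isomorphism, so after the identification via $r$ the left action simply becomes left multiplication by $J(p)$.
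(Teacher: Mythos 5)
Your proof is correct and follows essentially the same route as the paper's: Remark \autoref{Rem:MotimesRFsheaf} (via $I^{\sharp}$ being an isomorphism) handles the sheafification step, and $J$ is identified as the map induced by $(I^{\sharp})^{-1}$ on functions and the differential of $I$ on vector fields. You simply make explicit, in the coordinates $x$ and $z=x^{-1}$, what the paper leaves as a citation to the definition of the left action on the transfer bimodule, and you additionally spell out the involutivity $(I^{-1}J)J=\id_{\sh{D}}$, which the paper leaves implicit.
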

\begin{proof}
Remark \autoref{Rem:MotimesRFsheaf} shows that the second map in the definition of $l$ and $r$ is an isomorphism
since $I^{\sharp}: I^{-1}\sh{O} \rightarrow \sh{O}$ is an isomorphism of sheaves of algebras. 
For the remaining statements one recalls \cite{Kas00}[section 4.1] the definition of the left action
of $\sh{D}$ on $\sh{D}_I$. The differential of $I$ induces an isomorphism of sheaves of Lie algebras $\sh{T} \xrightarrow{\cong} I^{-1}\sh{T}$.
This map together with $(I^{\sharp})^{-1}: \sh{O} \xrightarrow{\cong} I^{-1}\sh{O}$ induces an isomorphism of sheaves of algebras $\sh{D} \xrightarrow{\cong} I^{-1}\sh{D}$, which is $J$.
\end{proof}

\begin{Rem}\label{Rem:involutiononH0ofTangent}
From \eqref{eq:vectorfieldssl2} we see that under the isomorphism
$\mathfrak{sl}_2 \xrightarrow{\cong} \Ho^0(\sh{T})$
the map $\sh{T} \rightarrow I^{-1}\sh{T}$ of the proof of Lemma \autoref{Lemma:Itransferbimodule} 
is given by $h \mapsto -h$, $e \mapsto f$, $f \mapsto e$. 
\end{Rem}
$J$ induces an involution of $(\Uea\mathfrak{sl}_2)_0$. 
We will denote the twist of a $(\Uea\mathfrak{sl}_2)_0$-module
$M$ by this involution by $M^-$. 
If $M$ is a lowest weight module, then by remark \autoref{Rem:involutiononH0ofTangent} $M^-$ is a highest weight
module and vice versa. $(\cdot)^-$ defines an exact auto-equivalence of the category
$\modulecat\sh{D}$ and $\modulecat (\Uea\mathfrak{sl}_2)_0$ respectively. 
We have $\tau J = J \tau$ and hence $(\cdot)^-$ commutes with $(\cdot)^{\vee}$.
Here $\tau$ and $(\cdot)^{\vee}$ were defined in \autoref{ssec:duality}. 

\begin{Lemma}\label{Lemma:autoequivalenceintertwinedbyHo}
There are natural isomorphisms $\Ho^k(\sh{M}^-)\cong\Ho^k(\sh{M})^-$
for any $\sh{M} \in \modulecat\sh{D}$.
\end{Lemma}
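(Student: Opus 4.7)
The plan is to combine two structural facts: (i) the involution $I$ is an isomorphism of varieties (in particular a homeomorphism swapping $\C_x$ and $\C_z$), so pushforward along $I$ preserves sheaf cohomology, and (ii) by Lemma \autoref{Lemma:Itransferbimodule} the transfer bimodule $\sh{D}_I$ is free of rank one as a left $\sh{D}$-module, so tensoring with it is essentially a twist. The only nontrivial content is tracking the right $(\Uea\mathfrak{sl}_2)_0$-module structure.

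First I would establish a vector space isomorphism. Since $I$ is an isomorphism of schemes, $I_\cdot$ is exact and for any quasicoherent sheaf $\sh{F}$ there is a natural isomorphism $\Ho^k(\Proj^1, I_\cdot \sh{F}) \cong \Ho^k(\Proj^1, \sh{F})$; one concrete way to see this is to note that $I$ swaps the affine open cover $\{\C_x,\C_z\}$ with itself, so the two Cech complexes in \eqref{eq:Cechcomplex} are identified term by term. Next, applying Remark \autoref{Rem:MotimesRFsheaf} to $\sh{R}=\sh{D}$ and the rank-one free left $\sh{D}$-module $\sh{D}_I$ (freeness coming from the isomorphism $l$ of Lemma \autoref{Lemma:Itransferbimodule}), the natural map yields an isomorphism $\sh{M}\otimes_{\sh{D}}\sh{D}_I \cong \sh{M}$ of sheaves of abelian groups, given on sections by $m\otimes l(1)\mapsto m$. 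Composing these, we obtain a natural vector space isomorphism $\Ho^k(\sh{M}^-)\cong \Ho^k(\sh{M})$.

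It remains to identify the right $(\Uea\mathfrak{sl}_2)_0$-module structures. Unwinding the isomorphism $\sh{M}\otimes_{\sh{D}}\sh{D}_I \cong \sh{M}$ using $l$ and $r$ from Lemma \autoref{Lemma:Itransferbimodule}, for $q\in I^{-1}\sh{D}$ one has $(m\otimes l(1))\cdot q = m\otimes r(q) = m\otimes l(J^{-1}(q))$, which maps to $m\cdot J^{-1}(q)$. Thus under our vector space isomorphism the right $I^{-1}\sh{D}$-action on $\sh{M}\otimes_{\sh{D}}\sh{D}_I$ corresponds to the right $\sh{D}$-action on $\sh{M}$ pulled back through $J^{-1}$. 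Taking $\Ho^0$ of $J$ gives, by construction, the involution of $(\Uea\mathfrak{sl}_2)_0$ defining the twist $(\cdot)^-$; hence the right $(\Uea\mathfrak{sl}_2)_0$-action induced on $\Ho^k(\sh{M}^-)$ equals that of $\Ho^k(\sh{M})$ precomposed with this involution, i.e. equals $\Ho^k(\sh{M})^-$.

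The main obstacle is the bookkeeping in the middle step: correctly tracing through which of $J$ or $J^{-1}$ appears, and confirming that the identification $\sh{M}\otimes_{\sh{D}}\sh{D}_I \cong \sh{M}$ from Remark \autoref{Rem:MotimesRFsheaf} genuinely transports the right $I^{-1}\sh{D}$-structure to the $J$-twisted right $\sh{D}$-structure (rather than the original one). Once this is carefully done using the explicit descriptions of $l$ and $r$ from Lemma \autoref{Lemma:Itransferbimodule}, naturality in $\sh{M}$ is automatic because every identification used is natural.
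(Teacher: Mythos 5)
Your proposal is correct and follows essentially the same route as the paper's proof: both reduce to the fact that $I$ is an (affine) isomorphism so $I_{\cdot}$ preserves cohomology, both use Remark \autoref{Rem:MotimesRFsheaf} together with the freeness of $\sh{D}_I$ from Lemma \autoref{Lemma:Itransferbimodule} to identify $\sh{M}\otimes_{\sh{D}}\sh{D}_I$ with $\sh{M}$, and both use $J=r^{-1}l$ to transport the module structure (the paper just organizes this through the Cech complex \eqref{eq:sheafCechcomplex} at the level of global sections rather than at the sheaf level first). Your worry about $J$ versus $J^{-1}$ is harmless here since $(I^{-1}J)J=\id_{\sh{D}}$ makes $J$ an involution on global sections.
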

\begin{proof}
We have $\Ho^k(I_*\sh{M}) \cong \Ho^k(\sh{M}\otimes_{\sh{D}}\sh{D}_I)$
since $I$ is an affine morphism.  
We reason with the complex \eqref{eq:sheafCechcomplex}.
We have an isomorphism $\Ho^0(\sh{C}^{\bullet}(\sh{M}))\otimes_{\Ho^0(\sh{D})}\Ho^0(\sh{D}_I) \xrightarrow{\cong}
\Ho^0(\sh{C}^{\bullet}(\sh{M})\otimes_{\sh{D}}\sh{D}_I)$ of complexes
due to remark \autoref{Rem:MotimesRFsheaf}. 
Further, we claim that we have
an isomorphism of complexes $\Ho^0(\sh{C}^{\bullet}(\sh{M})\otimes_{\sh{D}}\sh{D}_I) \cong \Ho^0(\sh{C}^{\bullet}(\sh{M}\otimes_{\sh{D}}\sh{D}_I))$. Indeed, this 
follows from the fact that for any open embedding $\kappa$ into $\Proj^1$ we have
$(\kappa_{\cdot}\kappa^{-1}\sh{M})\otimes_{\sh{D}}\sh{D}_I \cong \kappa_{\cdot}\kappa^{-1}(\sh{M}\otimes_{\sh{D}}\sh{D}_I)$
as $\sh{D}$-modules, where $\sh{D}$ acts via $\sh{D} \rightarrow \kappa_{\cdot}\kappa^{-1}\sh{D}$,
as can be checked on open subsets. 
In this way we obtain an isomorphism of complexes $\Ho^0(\sh{C}^{\bullet}(\sh{M}))\otimes_{\Ho^0(\sh{D})}\Ho^0(\sh{D}_I) \xrightarrow{\cong}
\Ho^0(\sh{C}^{\bullet}(\sh{M}\otimes_{\sh{D}}\sh{D}_I))$. 
Lemma \autoref{Lemma:Itransferbimodule} together with the definition of $(\cdot)^-$ now implies the statement.
\end{proof}

As an application of this lemma we e.g. deduce from $(j_{x\cdot}\Omega_{\C_x})^- \cong j_{z\cdot}\Omega_{\C_z}$ and \eqref{eq:H0andH1jdot} that
\begin{align*}
& \Ho^0(j_{z\cdot}\Omega_{\C_z})=\Omega_{\C_z}(\C_z)\cong\MVerma(-2)^- \quad \Ho^1(j_{z\cdot}\Omega_{\C_z})=0\;,
\end{align*}
but of course this can also be computed directly. 

\subsection{$\Ho^0$ and $\Ho^1$ of $j_{\cdot}\Omega^{(\alpha)}_{\C^{\times}}$, $\alpha \notin \Z$,
and definition of $\RVerma(\alpha)$}

According to \eqref{eq:jdotOmegaalphaonCx} the restriction $(j_{\cdot}\Omega^{(\alpha)}_{\C^{\times}})(\C_x)
\rightarrow (j_{\cdot}\Omega^{(\alpha)}_{\C^{\times}})(\C^{\times})$ is an isomorphism of vector spaces. Thus $\Ho^1(j_{\cdot}\Omega^{(\alpha)}_{\C^{\times}})=0$. Further, according to \eqref{eq:jdotOmegaalphaonCx} 
and the corresponding statement for $\C_z$ 
\begin{align*}
\Ho^0(j_{\cdot}\Omega^{(\alpha)}_{\C^{\times}}) = \Ho^0(\C^{\times},\Omega^{(\alpha)}_{\C^{\times}}) \cong (\Uea \mathfrak{sl}_2)_0/(h+2\alpha)\;.
\end{align*}
We set
\begin{align}\label{eq:defofRVerma}
\RVerma(\alpha) = (\Uea \mathfrak{sl}_2)_0/(h+2\alpha) = \Uea\mathfrak{sl}_2/(h+2\alpha,\; ef+\alpha(\alpha+1))\;.
\end{align}
The PBW theorem for $\Uea\mathfrak{sl}_2$ implies $\RVerma(\alpha)=\overline{f}\C[\overline{f}]\oplus \C[\overline{e}]$ as a vector space. Due to remark \autoref{Rem:eltarypropertiesofOmegaalpha} (2)
and the fact that the $h$-weights of $\RVerma(\alpha)$ 
are $-2\alpha+2\Z$ we have $\RVerma(\alpha) \cong \RVerma(\beta)$ if and only if $\alpha-\beta \in \Z$.  
The following lemma and its proof is analogous to Lemma \autoref{Lemma:alphanotanintegerjdotsimple}.
\begin{Lemma}\label{Lemma:RVermaalphasimple}
$\RVerma(\alpha)$ is a simple $\mathfrak{sl}_2$-module for $\alpha \notin \Z$.
\end{Lemma}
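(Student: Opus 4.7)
The plan is to follow the same strategy as in Lemma 1.3: decompose $\RVerma(\alpha)$ into one-dimensional $h$-weight spaces and show that, for $\alpha \notin \Z$, right multiplication by $e$ and $f$ gives isomorphisms between consecutive weight spaces, so any nonzero submodule must exhaust the whole module.

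First I would fix an explicit basis. By the PBW decomposition $\RVerma(\alpha) = \overline{f}\C[\overline{f}]\oplus \C[\overline{e}]$ noted just above the lemma, set $v_n := \overline{f^n}$ for $n \geq 0$ and $v_{-n} := \overline{e^n}$ for $n \geq 1$. Since in a right module $v \cdot e$ and $v \cdot f$ shift the $h$-weight by $-2$ and $+2$ respectively, $v_n$ has weight $-2\alpha + 2n$, and each weight space is one-dimensional (this also recovers the quoted fact that $\RVerma(\alpha) \cong \RVerma(\beta)$ iff $\alpha-\beta \in \Z$). Hence there exist scalars $c_n, d_n \in \C$ with $v_n \cdot e = c_n v_{n-1}$ and $v_n \cdot f = d_n v_{n+1}$, and by construction $d_n = 1$ for $n \geq 0$ and $c_n = 1$ for $n \leq 0$.

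The key computation is to show that all the remaining $c_n$ and $d_n$ are nonzero precisely when $\alpha \notin \Z$. Because the Casimir $c = ef + fe + h^2/2$ acts as zero on $\RVerma(\alpha)$ by the very definition \eqref{eq:defofRVerma}, and because $[e,f] = h$ holds in $\Uea\mathfrak{sl}_2$, on the weight-$(-2\alpha+2n)$ space one computes
\begin{align*}
ef\ \text{acts by}\ -(\alpha-n)(\alpha-n+1),\qquad fe\ \text{acts by}\ -(\alpha-n)(\alpha-n-1).
\end{align*}
Combined with $c_n d_{n-1} = -(\alpha-n)(\alpha-n+1)$ and $d_n c_{n+1} = -(\alpha-n)(\alpha-n-1)$ and the normalisations above, this yields $c_m = -(\alpha-m+1)(\alpha-m)$ for $m \geq 1$ and $d_m = -(\alpha-m-1)(\alpha-m)$ for $m \leq -1$. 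Each of these vanishes only if $\alpha \in \Z$, so under our hypothesis all $c_n, d_n$ are nonzero and right multiplication by $e$ and $f$ provides isomorphisms between any two consecutive weight spaces.

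To conclude the simplicity argument, let $N \subseteq \RVerma(\alpha)$ be a nonzero submodule. Given any $v = \sum_{j=1}^{k} a_j v_{n_j} \in N$ with distinct $n_j$, applying powers of $h$ and a Vandermonde inversion shows each $a_j v_{n_j} \in N$; hence $N = \bigoplus_{n \in \Z} N \cap \C v_n$. Pick $n$ with $v_n \in N$; by the previous paragraph the orbit of $v_n$ under iterated right multiplication by $e$ and $f$ contains every $v_m$, so $N = \RVerma(\alpha)$. The main obstacle is just the bookkeeping of the scalars $c_n, d_n$; once the Casimir relation is used the vanishing locus of these scalars matches the integer values of $\alpha$ exactly, which is what makes the hypothesis $\alpha \notin \Z$ necessary and sufficient for the argument.
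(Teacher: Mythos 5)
Your proof is correct and follows essentially the same route as the paper's: decompose a nonzero submodule into its one-dimensional $h$-weight spaces (via the Vandermonde argument borrowed from Lemma \ref{Lemma:alphanotanintegerjdotsimple}) and use the Casimir relation to show that $\cdot e$ and $\cdot f$ act invertibly between consecutive weight spaces when $\alpha\notin\Z$; your scalars $c_m d_{m-1}=-(\alpha-m)(\alpha-m+1)$ and $d_m c_{m+1}=-(\alpha-m)(\alpha-m-1)$ agree with the paper's formulas \eqref{eq:efandfeonweightspaces}. The only difference is cosmetic: you track explicit structure constants on the PBW basis, whereas the paper works directly with the operators $\cdot ef$ and $\cdot fe$ on the weight spaces of the submodule.
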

\begin{proof}
Let $M \neq 0$ be a submodule of $\RVerma(\alpha)$. As in the proof of Lemma \autoref{Lemma:alphanotanintegerjdotsimple}
we show that $M$ is a direct sum of its $h$-weight spaces $M = \bigoplus_{n \in \Z}M_{-2\alpha+2n}$
with $M_{-2\alpha+2n}\neq 0$ for some $n$. We have linear maps
\begin{align*}
\xymatrix{ M_{-2\alpha+2(n+1)} \ar@<5pt>[r]^{\cdot e}
& \ar[l]^{\cdot f} M_{-2\alpha+2n}}\;.
\end{align*}
From the definition \eqref{eq:defofRVerma} it follows
\begin{align}\label{eq:efandfeonweightspaces}
\cdot ef =(n-\alpha)(\alpha-n-1)\id_{M_{-2\alpha+2(n+1)}}
\qquad \cdot fe = (n-\alpha)(\alpha-n-1)\id_{M_{-2\alpha+2n}}\;.
\end{align}
Thus, since $\alpha \notin \Z$, the maps $e$ and $f$ are invertible for each $n \in \Z$. It follows $M_{-2\alpha+2n} \neq 0$
for all $n \in \Z$ and hence $M = \RVerma(\alpha)$. 
\end{proof}

\subsection{$\Ho^0$ and $\Ho^1$ of $j_{\cdot}\Omega_{\C^{\times}}$}\label{ssec:Ho0andHo1ofjdot}
We find $\Ho^0(j_{\cdot}\Omega_{\C^{\times}})=\Omega_{\C^{\times}}(\C^{\times})\cong \RVerma(0)$, now allowing
$\alpha = 0$ in the definition \eqref{eq:defofRVerma}. Also $\Ho^1(j_{\cdot}\Omega_{\C^{\times}})=0$.
\begin{Rem}
For any $\alpha \in \C$ we have $(j_{\cdot}\Omega^{(\alpha)}_{\C^{\times}})^- \cong j_{\cdot}\Omega^{(-\alpha)}_{\C^{\times}}$
and by Remark \autoref{Rem:involutiononH0ofTangent} $\RVerma(\alpha)^-\cong\RVerma(-\alpha)$.
\end{Rem}

Since $\Ho^1(j_{x\cdot}\Omega_{\C_x})=0$ the long exact sequence of cohomology of \eqref{eq:sesforjdotOmegaCtimes} gives the exact sequence
\begin{align*}
0 \rightarrow \MVerma(-2) \rightarrow \RVerma(0) \rightarrow \MVerma(0)^- \rightarrow 0\;.
\end{align*}
Applying $(\cdot)^-$ we obtain the exact sequence
\begin{align*}
0 \rightarrow \MVerma(-2)^- \rightarrow \RVerma(0) \rightarrow \MVerma(0) \rightarrow 0\;.
\end{align*}

\subsection{$\Ho^0$ and $\Ho^1$ of $j_!\Omega_{\C^{\times}}$}\label{ssec:Ho0andHo1ofj!}
From $j_!\Omega_{\C^{\times}}\vert \C_z \cong j_{x!}\Omega_{\C_x}\vert \C_z \cong \sh{D}_{\C_z}/\partial_z z\sh{D}_{\C_z}$, see \eqref{eq:sesforjxshriek},
and the corresponding statement for $\C_x$ we deduce $\Ho^0(j_!\Omega_{\C^{\times}}) \cong \MVerma(0)\oplus \MVerma(0)^-$.
Also $\Ho^1(j_!\Omega_{\C^{\times}})=x^{-1}\diff x \cong \Lsimple(0)$. Since $\Ho^1(\iota_{z*}\C)=0$ the
long exact sequence of cohomology of \eqref{eq:sesforjshriekOmegaCtimes} gives the exact sequence
\begin{align*}
0 \leftarrow \MVerma(0) \leftarrow \MVerma(0)\oplus\MVerma(0)^- \leftarrow \MVerma(0)^- \leftarrow 0\;.
\end{align*}

\subsection{$\Ho^0$ and $\Ho^1$ of $j_{!x\cdot z}\Omega_{\C^{\times}}$ and $j_{\cdot x !z}\Omega_{\C^{\times}}$}
As in \autoref{ssec:Ho0andHo1ofjdot} and \autoref{ssec:Ho0andHo1ofj!} we find
$\Ho^0(j_{!x\cdot z}\Omega_{\C^{\times}})\cong \RVerma(-1)$ and $\Ho^1(j_{!x\cdot z}\Omega_{\C^{\times}})=0$
as well as $\Ho^0(j_{\cdot x !z}\Omega_{\C^{\times}})\cong \RVerma(1)$ and $\Ho^1(j_{\cdot x !z}\Omega_{\C^{\times}})=0$.
Again, one can write down the long exact sequence of cohomology of \eqref{eq:sesforjdotx!zOmegaCtimes}.

\begin{Rem}\label{Rem:BBuntwisted}
We compare the above results for $\Ho^0$ and $\Ho^1$
with the equivalence of Beilinson-Bernstein for \emph{left} $\sh{D}$-modules
\cite{BB81}:
Then $\Ho^1 = 0$ and $\Ho^0$ is an exact equivalence of categories $\sh{D}\modulecat \rightarrow (\Uea\mathfrak{sl}_2)_0\modulecat$.
The functor $\Delta$:  $(\Uea\mathfrak{sl}_2)_0\modulecat \rightarrow$ $\sh{D}\modulecat$
\cite{BB81} defined by $\Delta(M) = \sh{D} \otimes_{\underline{(\Uea\mathfrak{sl}_2)_0}} \underline{M}$,
where $\underline{V}$ denotes the constant sheaf on $\Proj^1$ constructed from the vector space $V$, is quasi-inverse
to $\Ho^0$. Namely, the natural morphism of $\sh{D}$-modules 
\begin{align}\label{eq:DmodulegeneratedfromH0}
\sh{D} \otimes_{\underline{(\Uea\mathfrak{sl}_2)_0}} \underline{\Ho^0(\sh{M})} 
\rightarrow \sh{M}
\end{align}
is an isomorphism of $\sh{D}$-modules. Note that a $\sh{D}$-module is generated by its global sections if and only if
\eqref{eq:DmodulegeneratedfromH0} surjects. Of the above $\sh{D}$-modules,
only $j_{x\cdot}\Omega_{\C_x}$ and $\iota_{x*}\C$ satisfy this. In fact, $j_{x\cdot}\Omega_{\C_x}$ is generated
by its global sections even as an $\sh{O}$-module. 
In the above $\Ho^0$ and $\Ho^1$ the dual Verma module $\MVerma(0)^{\vee}$ does not appear
and $\MVerma(0)$ is $\Ho^0$ both of $\iota_{x*}\C$ and $j_{x!}\Omega_{\C_x}$.
In fact, by \cite{BB93}[3.3.3 Corollary] the equivalence $\Ho^0: \sh{D}\modulecat \rightarrow (\Uea\mathfrak{sl}_2)_0\modulecat$ 
restricts to an equivalence between the subcategories of strongly $\Gm$-equivariant $\sh{D}$-modules
and of $(\Uea\mathfrak{sl}_2)_0$-modules on which the $\C h$-action comes from a $T$-action. 
Coming back to our computations of $\Ho^0$, it is manifest, cf. Remark \autoref{Rem:weightmodulesareintegrable}, that
the direct images which are strongly $\Gm$-equivariant as discussed in \autoref{ssec:equivstructureondirectimage}
have the property that their $\Ho^0$ carry a $\C h$-action coming from a $T$-action.
\end{Rem}

\section{Twisted $\sh{D}$-modules}\label{sec:twistedDmodules}
\subsection{Generalities on twisted $\sh{D}$-modules}\label{ssec:generalitiesontwistedDmodules}
Let $\lambda \in \Z$. We have a corresponding line bundle $\sh{O}(\lambda)$
on $\Proj^1$. The sheaf of algebras
$\sh{D}(\lambda) :=\sh{O}(-\lambda)\otimes_{\sh{O}}\sh{D}\otimes_{\sh{O}} \sh{O}(\lambda)$ is the sheaf of $\sh{O}(\lambda)$-twisted differential operators \cite{BB81}, \cite{BB93}[section 2.1]. E.g. $\Omega \otimes_{\sh{O}} \sh{O}(\lambda)$ is naturally a (right) $\sh{D}(\lambda)$-module. 
$\sh{D}(\lambda)$ possesses an increasing filtration $F^n \sh{D}(\lambda)$, $n \in \Z_{\geq 0}$, by
locally free $\sh{O}$-submodules of finite rank defined by the order of the differential operator. 
Let us first recall a description of $F^1\sh{D}(\lambda)$ given in \cite{BB93}[section 2.1]. 

\begin{Rem} The commutator $[\cdot,\cdot]$ in $\sh{D}(\lambda)$
turns $F^1\sh{D}(\lambda)$ into a sheaf of Lie algebras. There is a short exact sequence of locally free $\sh{O}$-modules
\begin{align}\label{eq:LiealgebroidF1Dlambda}
0 \rightarrow F^0\sh{D}(\lambda)=\sh{O} \rightarrow F^1\sh{D}(\lambda) \rightarrow \sh{T} \rightarrow 0\;,
\end{align}
where $F^1\sh{D}(\lambda) \rightarrow \sh{T}=\sh{D}er(\sh{O})$, $p \mapsto [p,\cdot]$.
Here $\sh{D}er(\sh{O})$ is the $\sh{O}$-module of derivations. This makes $F^1\sh{D}(\lambda)$
into a Picard Lie algebroid in the sense of \cite{BB93}[section 2.1], a special kind of Lie algebroid. 
\end{Rem}

\begin{Rem}
$F^1 \sh{D}(-\lambda)$ identifies with the Atiyah algebroid of the principal $\Gm$-bundle $\sh{O}(\lambda)^{\times}$
associated to $\sh{O}(\lambda)$.
Let us elaborate on this. The \emph{Atiyah algebroid} of $\sh{O}(\lambda)^{\times}$ is given by an exact sequence
\begin{align}\label{eq:Atiyahalgebroid}
0 \rightarrow \Lie \Gm \times_{\Gm} \sh{O}(\lambda)^{\times} \rightarrow \left(p_{\cdot} \sh{T}_{\sh{O}(\lambda)^{\times}}\right)^{\Gm} \rightarrow
\sh{T} \rightarrow 0\;,
\end{align}
where $p: \sh{O}(\lambda)^{\times} \rightarrow \Proj^1$ denotes the projection and $(\cdot)^{\Gm}$ $\Gm$-invariants. 
The first map in \eqref{eq:Atiyahalgebroid} realizes 
the adjoint bundle $\Lie \Gm \times_{\Gm}\sh{O}(\lambda)^{\times}\cong \sh{O}$ as the vertical vector fields on 
the total space of the $\Gm$-bundle, which is also denoted by $\sh{O}(\lambda)^{\times}$. We have an isomorphism  $p_{\cdot} \sh{O}_{\sh{O}(\lambda)^{\times}}
\cong \bigoplus_{n \in \Z}\sh{O}(\lambda n)$ of $\Z$-graded sheaves of $\sh{O}_{\Proj^1}$-algebras. Here
the homogeneous component
$\sh{O}(\lambda n)$ identifies with the subsheaf on which the vertical vector fields act by multiplication by $n$. 
$\left(p_{\cdot} \sh{T}_{\sh{O}(\lambda)^{\times}}\right)^{\Gm}$ acts on $p_{\cdot}\sh{O}_{\sh{O}(\lambda)^{\times}}$
preserving the homogeneous components. In this way we obtain a map $\left(p_{\cdot} \sh{T}_{\sh{O}(\lambda)^{\times}}\right)^{\Gm} \rightarrow F^1\sh{D}(-\lambda)$, which turns out
to be an isomorphism of Lie algebroids. 
\end{Rem}

We have a map of Lie algebroids 
\begin{align}\label{eq:algebroidac}
\ac: \mathfrak{sl}_2\otimes_{\C}\sh{O}_{\Proj^1} \rightarrow F^1\sh{D}(\lambda)
\end{align}
induced by the natural $\SL_2$-equivariant structure of $\sh{O}(\lambda)$. 
This makes precise the statement that $\ac$ lifts the infinitesimal action of $\SL_2$ on $\Proj^1$
to first order differential operators.
The expression $z_0^{\lambda}$ respectively $z_1^{\lambda}$ is a nowhere vanishing section of $\sh{O}(\lambda)$ on $\C_x$
respectively $\C_z$. On global sections we have
\begin{align}\label{eq:twistedfirstorder}
\begin{split}
\ac(e\otimes 1) &= z_0^{-\lambda}\otimes\partial_x\otimes z_0^{\lambda} = z_1^{-\lambda}\otimes(-z^2\partial_z-\lambda z)\otimes z_1^{\lambda}\\
\ac(h\otimes 1) &= z_0^{-\lambda}\otimes(-2x\partial_x-\lambda)\otimes z_0^{\lambda} = z_1^{-\lambda}\otimes(2z\partial_z+\lambda)\otimes z_1^{\lambda}\\
\ac(f\otimes 1) &= z_0^{-\lambda}\otimes(-x^2\partial_x-\lambda x)\otimes z_0^{\lambda} = z_1^{-\lambda} \otimes \partial_z \otimes z_1^{\lambda}\;, 
\end{split}
\end{align}
cf. \eqref{eq:vectorfieldssl2}. 
\begin{Rem} The space of extensions of the form \eqref{eq:LiealgebroidF1Dlambda} is $\Exten^1(\sh{T},\sh{O})=\Ho^1(\Omega^1) = \C \frac{\diff x}{x}$, see \eqref{eq:H0andH1Omega}. It has the following Cech description. Let $\sigma_{x,z}: \sh{T}(\C_{x,z}) \rightarrow F^1\sh{D}(\lambda)(\C_{x,z})$
be $\sh{O}(\C_{x,z})$-linear sections of \eqref{eq:LiealgebroidF1Dlambda} over $\C_{x,z}$. If $\sigma_x^{\prime}$ is 
another such section, then $\sigma_x-\sigma_x^{\prime}: \sh{T}(\C_x) \rightarrow \sh{O}(\C_x)$ is an arbitrary
$\sh{O}(\C_x)$-linear map, i.e. an element of $\Omega^1(\C_x)$. The difference $\sigma_x \vert \C^{\times} 
-\sigma_z \vert \C^{\times}: \sh{T}(\C^{\times}) \rightarrow \sh{O}(\C^{\times})$ is $\sh{O}(\C^{\times})$-linear
and hence defines an element of $\Omega^1(\C^{\times})$. The extension class is given by the class
of $\sigma_x \vert \C^{\times} -\sigma_z \vert \C^{\times}$ in $\Ho^1(\Omega^1)$, which is independent of
the choice of $\sigma_{x,z}$. Let us compute the extension class from the formulae \eqref{eq:twistedfirstorder}.
The assignment $\partial_x \mapsto z_0^{-\lambda}\otimes \partial_x \otimes z_0^{\lambda}$
defines a unique $\sigma_x$ and  $\partial_z \mapsto z_1^{-\lambda}\otimes \partial_z \otimes z_1^{\lambda}$ defines
a unique $\sigma_z$. By $\sh{O}(\C_z)$-linearity $\sigma_z$ sends $-z^2\partial_z \mapsto z_1^{-\lambda}\otimes(-z^2\partial_z)\otimes z_1^{\lambda}$ and consequently by the first line in \eqref{eq:twistedfirstorder} $\sigma_x\vert \C^{\times} - \sigma_z\vert \C^{\times}$
sends 
\begin{align*}
\partial_x = -z^2\partial_z \mapsto z_1^{-\lambda}\otimes(-\lambda z)\otimes z_1^{\lambda} = -\lambda z\;.
\end{align*} 
Thus, the extension class is $-\lambda \frac{\diff x}{x}$, which is the first Chern class of $\sh{O}(-\lambda)$. 
This is a general fact, cf. \cite{BB93}[section 2.1]. 
\end{Rem}

\eqref{eq:algebroidac} induces a map of sheaves of algebras
$\Uea(\mathfrak{sl}_2\otimes_{\C}\sh{O}_{\Proj^1}) \rightarrow \sh{D}(\lambda)$, where $\Uea(\cdot)$
is defined in \cite{BB93}[section 1.2.5]. 
The corresponding map on global sections $\Uea\mathfrak{sl}_2 \rightarrow \Ho^0(\sh{D}(\lambda))$
induces an isomorphism $(\Uea\mathfrak{sl}_2)_{\chi_{\lambda}} := \Uea\mathfrak{sl}_2/(c -\frac{1}{2}\lambda(\lambda-2)) \xrightarrow{\cong} \Ho^0(\sh{D}(\lambda))$ \cite{BB81}.
Thus, if $\sh{M}$ is a $\sh{D}(\lambda)$-module, then the $\Ho^j(\sh{M})$ are naturally $(\Uea\mathfrak{sl}_2)_{\chi_{\lambda}}$-modules. 

\subsection{$\Ho^0$ and $\Ho^1$} The results of this section
generalize the ones of \autoref{sec:H0andH1}. 
From the Cech complex \eqref{eq:Cechcomplex}  and \eqref{eq:twistedfirstorder} we find
\begin{Thm}\label{Thm:hwmodulestwisted}
\begin{align}\label{eq:twistedjxextensions}
\begin{split}
& \Ho^0(\Omega\otimes_{\sh{O}} \sh{O}(\lambda))\cong \begin{cases} \Lsimple(\lambda-2) & \lambda \geq 2 \\ 0 & \lambda \leq 1\end{cases}
\quad \Ho^1(\Omega \otimes_{\sh{O}}\sh{O}(\lambda))\cong \begin{cases} 0 & \lambda \geq 1 \\ \Lsimple(-\lambda) & \lambda \leq 0\end{cases}\\
& \Ho^0(j_{x\cdot}\Omega_{\C_x})\cong \MVerma(\lambda-2)^{\vee} \quad \Ho^1(j_{x\cdot}\Omega_{\C_x})=0\\
& \Ho^0(\iota_{x*}\C) \cong \MVerma(-\lambda)\quad \Ho^1(\iota_{x*}\C) = 0\\
& \Ho^0(j_{x!}\Omega_{\C_x}) \cong \begin{cases} \MVerma(\lambda-2) & \lambda \geq 1 \\ \MVerma(-\lambda) & \lambda \leq 0\end{cases}\quad \Ho^1(j_{x!}\Omega_{\C_x})\cong\begin{cases} 0 & \lambda \geq 1 \\ \Lsimple(-\lambda) & \lambda \leq 0\end{cases}
\end{split}\;.
\qed
\end{align} 
\end{Thm}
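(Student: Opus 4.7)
The plan is to extract each cohomology from the Cech complex \eqref{eq:Cechcomplex} attached to the affine cover $\{\C_x,\C_z\}$, trivialising $\sh{O}(\lambda)$ by $z_0^\lambda$ on $\C_x$ and by $z_1^\lambda$ on $\C_z$. The overlap gluing reads $\diff z\otimes z_1^\lambda=-x^{\lambda-2}\diff x\otimes z_0^\lambda$ on $\C^\times$, which makes the Cech differential explicit on monomials. Once $\Ho^0$ and $\Ho^1$ are isolated as vector spaces, I will read off their $(\Uea\mathfrak{sl}_2)_{\chi_\lambda}$-module structure from \eqref{eq:twistedfirstorder}: on local trivialised sections one computes that $x^n\diff x\otimes z_0^\lambda$ has $h$-weight $2n+2-\lambda$, that $\overline{\partial_z^n}\otimes z_1^\lambda$ has weight $2n-2+\lambda$, and that $\overline{z^n}\otimes z_1^\lambda$ has weight $\lambda-2n-2$.

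I will treat each sheaf in turn. For $\Omega\otimes_\sh{O}\sh{O}(\lambda)$ the Cech differential sends $(x^n\diff x\otimes z_0^\lambda,z^m\diff z\otimes z_1^\lambda)$ to $(x^n+x^{\lambda-m-2})\diff x\otimes z_0^\lambda$ with $n,m\geq 0$, and a routine monomial count reproduces the classical Serre dimensions. For $j_{x\cdot}\Omega_{\C_x}$ the same setup applies with $m\in\Z$; since $j_x$ is affine, $\Ho^0$ is just $\C[x]\diff x\otimes z_0^\lambda$ and $\Ho^1=0$. For $\iota_{x*}\C$ the sheaf is supported at $(x=\infty)$, so the Cech complex collapses to $(\sh{D}_{\C_z}/z\sh{D}_{\C_z})(\C_z)\otimes z_1^\lambda=\C[\overline{\partial_z}]\otimes z_1^\lambda$ in degree zero. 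For $j_{x!}\Omega_{\C_x}$ I use the local presentation from \eqref{eq:sesforjxshriek}, noting that $(\sh{D}_{\C_z}/\partial_z z\sh{D}_{\C_z})(\C_z)$ has $\C$-basis $\{\overline{z^n}:n\geq 0\}\cup\{\overline{\partial_z^n}:n\geq 1\}$ whose restrictions to $\C^\times$ are $z^n\diff z$ and $0$ respectively, after which the kernel/cokernel count splits according to whether $\lambda\geq 1$ or $\lambda\leq 0$.

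To promote these vector-space calculations into module identifications I compare weight multisets and then check whether the extremal (lowest-weight) vector is cyclic under the raising operator $f$, which distinguishes $\MVerma(\mu)$ from $\MVerma(\mu)^\vee$. For $j_{x\cdot}\Omega_{\C_x}$ the extremal vector $\diff x\otimes z_0^\lambda$ satisfies $x^n\diff x\cdot f=(n+2-\lambda)x^{n+1}\diff x$, which vanishes at $n=\lambda-2$ when $\lambda\geq 2$; the extremal vector therefore fails to generate the module, and $\Ho^0\cong\MVerma(\lambda-2)^\vee$ (which coincides with $\MVerma(\lambda-2)$ for $\lambda\leq 1$ by the remark in \autoref{ssec:duality}). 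For $\iota_{x*}\C$ the vector $\overline{1}\otimes z_1^\lambda$ satisfies $\overline{1}\cdot e=0$ and $\overline{1}\cdot f^n=\overline{\partial_z^n}\neq 0$, so $\Ho^0\cong\MVerma(-\lambda)$ as a genuine Verma. For $j_{x!}\Omega_{\C_x}$ with $\lambda\leq 0$ the extremal vector is $\overline{\partial_z}\otimes z_1^\lambda$ of weight $\lambda$ and it generates $\MVerma(-\lambda)$ freely under $f$; for $\lambda\geq 1$ it is instead the kernel element of weight $2-\lambda$, given by $(\diff x\otimes z_0^\lambda,-\overline{z^{\lambda-2}}\otimes z_1^\lambda)$ when $\lambda\geq 2$ and by $\overline{\partial_z}\otimes z_1^\lambda$ when $\lambda=1$.

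The main obstacle will be the $j_{x!}$ case with $\lambda\geq 2$, where the module assembles contributions from both local pieces and the $f$-orbit of the extremal vector must cross from the $x$-basis to the $\overline{\partial_z}$-basis without dying. The crucial check is that $(x^{\lambda-2}\diff x\otimes z_0^\lambda,-\overline{1}\otimes z_1^\lambda)\cdot f=(0,-\overline{\partial_z}\otimes z_1^\lambda)$: the $\C_x$ component vanishes because $n+2-\lambda=0$ at $n=\lambda-2$, but the $\C_z$ component produces $-\overline{\partial_z}\otimes z_1^\lambda$, so the chain continues from weight $\lambda-2$ to weight $\lambda$ and from there generates the whole $\overline{\partial_z^n}$ sequence under further applications of $f$. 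Combined with the analogous $e$-annihilation of the extremal vector this establishes that $\Ho^0\cong\MVerma(\lambda-2)$ for $\lambda\geq 1$ and identifies all the asserted Verma structures uniformly in $\lambda$.
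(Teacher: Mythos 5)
Your proposal is correct and follows exactly the route the paper intends: the theorem is stated there with only the remark that it follows ``from the Cech complex \eqref{eq:Cechcomplex} and \eqref{eq:twistedfirstorder}'', and your computation (trivialising $\sh{O}(\lambda)$ by $z_0^{\lambda}$, $z_1^{\lambda}$, gluing via $\diff z\otimes z_1^{\lambda}=-x^{\lambda-2}\diff x\otimes z_0^{\lambda}$, and tracking the weight ladders, including the crossing at weight $\lambda-2\to\lambda$ in the $j_{x!}$ case) is precisely that argument fleshed out. One bookkeeping caution: the weight of $\overline{\partial_z^n}\otimes z_1^{\lambda}$ is $2n-2+\lambda$ in $\sh{D}_{\C_z}/\partial_z z\sh{D}_{\C_z}$ but $2n+\lambda$ in $\sh{D}_{\C_z}/z\sh{D}_{\C_z}$ (there $\overline{\partial_z^n z\partial_z}=n\,\overline{\partial_z^n}$), so applying your displayed formula verbatim to $\iota_{x*}\C$ would put the lowest weight at $\lambda-2$ instead of the correct $\lambda$ required for $\MVerma(-\lambda)$ --- your stated conclusion is right, but that formula should be restricted to the $j_{x!}$ chart.
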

Even though this is not reflected in the notation, the direct images $j_{x\cdot}\Omega_{\C_x}$ etc.  
are now of course understood as $\sh{D}(\lambda)$-modules, differing from the previously considered modules
by a factor $\sh{O}(\lambda)$. 
Theorem \autoref{Thm:hwmodulestwisted} is certainly well-known. In fact
a similar statement is known to hold for any semisimple Lie algebra over $\C$ instead of $\mathfrak{sl}_2$.
The first part is the Borel-Weil-Bott theorem, while the identification of the cohomology of $j_{x\cdot}\Omega_{\C_x}$ and $\iota_{x*}\C$ is proven in \cite{BK81}[equation (5.1.2) and Corollary 5.8] and \cite{Gai05}[Theorem 10.6] in the untwisted case.

\begin{Rem}
Independent of the twist it holds that $\Ho^0$ of $\iota_{x*}$ resp. $j_{x\cdot}$ is a Verma resp. dual Verma module. 
\end{Rem}
Let $\modulecat\sh{D}(\lambda)$ denote the category of $\sh{D}(\lambda)$-modules quasicoherent over
$\sh{O}$. The definitions and results of \autoref{ssec:autoequivalences-}
carry over to the twisted case: Since $I^*\sh{O}(\lambda) \cong \sh{O}(\lambda)$ we again have
an exact auto-equivalence $(\cdot)^-$ of $\modulecat\sh{D}(\lambda)$, further an exact auto-equivalence
$(\cdot)^-$ of $\modulecat(\Uea\mathfrak{sl}_2)_{\chi_{\lambda}}$ and Lemma \autoref{Lemma:autoequivalenceintertwinedbyHo}. 

\subsubsection{$\Ho^0$ and $\Ho^1$ of $j_{\cdot}\Omega^{(\alpha)}_{\C^{\times}}$, $\alpha \notin \Z$,
and definition of $\RVerma(\lambda,\alpha)$}
We find
\begin{Thm}\label{Thm:jdotOmegaalpha}
\begin{align*}
\Ho^0(j_{\cdot}\Omega^{(\alpha)}_{\C^{\times}})\cong (\Uea\mathfrak{sl}_2)_{\chi_{\lambda}}/(h+2\alpha+\lambda)\quad \Ho^1(j_{\cdot}\Omega^{(\alpha)}_{\C^{\times}})=0\;.
\end{align*}\qed
\end{Thm}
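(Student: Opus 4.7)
The plan is to imitate the proof in the untwisted case (\autoref{ssec:Ho0andHo1ofjdot}): compute the Cech complex \eqref{eq:Cechcomplex} for the cover $\{\C_x, \C_z\}$ and then identify the resulting $(\Uea\mathfrak{sl}_2)_{\chi_\lambda}$-module by its generator and its defining relation. As a first step, the computation carried out in Lemma \autoref{Lemma:alphanotanintegerjdotsimple} shows that the restriction maps $(j_{\cdot}\Omega^{(\alpha)}_{\C^{\times}})(\C_{x,z}) \to (j_{\cdot}\Omega^{(\alpha)}_{\C^{\times}})(\C^{\times})$ are isomorphisms of vector spaces. This persists after tensoring with $\sh{O}(\lambda)$ (trivialized on $\C_x$ by $z_0^{\lambda}$ and on $\C_z$ by $z_1^{\lambda}$), so the Cech differential $(s_1, s_2) \mapsto s_1 - s_2$ is surjective with kernel the diagonal. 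This immediately yields $\Ho^1 = 0$ and $\Ho^0 \cong \Omega^{(\alpha)}_{\C^{\times}}(\C^{\times})$ as vector spaces.

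Next, I would compute the $(\Uea\mathfrak{sl}_2)_{\chi_\lambda}$-action on the generator $v := \bar 1 \otimes z_0^{\lambda}$ using \eqref{eq:twistedfirstorder}. Since $\bar 1 \cdot x\partial_x = \alpha \bar 1$ in $\Omega^{(\alpha)}_{\C^{\times}}$, the identity $\ac(h \otimes 1)\vert \C_x = z_0^{-\lambda} \otimes (-2x\partial_x - \lambda) \otimes z_0^{\lambda}$ gives $v \cdot h = -(2\alpha + \lambda)v$. This produces a well-defined morphism of right $(\Uea\mathfrak{sl}_2)_{\chi_\lambda}$-modules
\begin{align*}
\phi \colon (\Uea\mathfrak{sl}_2)_{\chi_\lambda}/(h + 2\alpha + \lambda) \longrightarrow \Ho^0(j_{\cdot}\Omega^{(\alpha)}_{\C^{\times}})\;.
\end{align*}

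To conclude, I would show $\phi$ is an isomorphism by matching $h$-weight decompositions. Both sides decompose into one-dimensional weight spaces supported on $\{-2\alpha - \lambda + 2n : n \in \Z\}$ (the source by PBW, as for $\RVerma(\alpha)$ in \eqref{eq:defofRVerma}; the target by Remark \autoref{Rem:eltarypropertiesofOmegaalpha}(1)). A short induction from the identities $\bar 1 x^n \cdot \partial_x = (\alpha - n + 1) \bar 1 x^{n-1}$ and $\bar 1 x^n \cdot (-x^2 \partial_x - \lambda x) = (n + 1 - \alpha - \lambda) \bar 1 x^{n+1}$ shows that $\phi(v \cdot e^k)$ and $\phi(v \cdot f^k)$ are nonzero scalar multiples of $\bar 1 x^{-k}$ and $\bar 1 x^{k}$, respectively. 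This is the delicate step: the scalars encountered are of the form $\alpha + j$ and $j - \alpha - \lambda$ with $j \in \Z_{> 0}$, and their nonvanishing is exactly guaranteed by $\alpha \notin \Z$ combined with $\lambda \in \Z$ (so that $\alpha + \lambda \notin \Z$ as well). Without these hypotheses $\phi$ could degenerate on some weight space and the cohomology would acquire a richer subquotient structure.
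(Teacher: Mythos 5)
Your proposal is correct and follows essentially the same route as the paper: the theorem is proved there by the same direct Cech computation, observing (as in Lemma \autoref{Lemma:alphanotanintegerjdotsimple}) that for $\alpha \notin \Z$ the restriction maps to $\C^{\times}$ are isomorphisms, so that $\Ho^1$ vanishes and $\Ho^0$ is $\Omega^{(\alpha)}_{\C^{\times}}(\C^{\times})$ with the $\mathfrak{sl}_2$-action read off from \eqref{eq:twistedfirstorder}. Your explicit verification that the map from $(\Uea\mathfrak{sl}_2)_{\chi_{\lambda}}/(h+2\alpha+\lambda)$ is an isomorphism via matching one-dimensional weight spaces only makes explicit what the paper leaves implicit, and your scalars $(\alpha+j)$ and $(j-\alpha-\lambda)$ are consistent with \eqref{eq:twistedefandfeonweightspaces}.
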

We set
\begin{align}\label{eq:RVermaalphalambda}
\RVerma(\lambda,\alpha) = (\Uea \mathfrak{sl}_2)_{\chi_{\lambda}}/(h+2\alpha+\lambda) = \Uea\mathfrak{sl}_2/(h+2\alpha+\lambda, ef+(\alpha+\lambda)(\alpha+1))
\end{align}
generalizing \eqref{eq:defofRVerma}. The following
lemma and its proof generalize Lemma \autoref{Lemma:RVermaalphasimple}
and hence we will be brief.
\begin{Lemma} Let $\alpha \notin \Z$. 
$\RVerma(\lambda,\alpha)$ is simple.
$\RVerma(\lambda,\alpha) \cong \RVerma(\mu,\beta)$ if and only if
$\mu \in \{\lambda,-\lambda+2\}$ and $\alpha-\beta \in \Z$.
\end{Lemma}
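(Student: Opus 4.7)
The plan is to follow the strategy of \autoref{Lemma:RVermaalphasimple} for the simplicity assertion, and then decompose the isomorphism question into an obstruction coming from the central character and one coming from the $h$-spectrum, with a concrete construction in the sufficient direction.

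For simplicity, I would first observe that PBW applied to the presentation \eqref{eq:RVermaalphalambda} gives, exactly as in the untwisted case, the vector-space decomposition $\RVerma(\lambda,\alpha) = \overline{f}\,\C[\overline{f}] \oplus \C[\overline{e}]$, so the module splits into one-dimensional $h$-weight spaces with weights in $-2\alpha - \lambda + 2\Z$. Repeating the argument from \autoref{Lemma:RVermaalphasimple}, any nonzero submodule is a direct sum of weight spaces; it then suffices to show $\cdot e$ and $\cdot f$ act invertibly between adjacent weight spaces. Combining $[e,f]=h$ with the Casimir relation $ef + fe + h^2/2 = \lambda(\lambda-2)/2$ valid in $(\Uea\mathfrak{sl}_2)_{\chi_\lambda}$, I obtain that on the weight space of weight $\mu = -2\alpha-\lambda+2n$ the scalar actions are $\cdot ef = (\alpha+\lambda-n)(n-\alpha-1)$ and $\cdot fe = (\alpha+\lambda-n-1)(n-\alpha)$, generalizing \eqref{eq:efandfeonweightspaces}. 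Both are nonzero for every $n \in \Z$ because $\alpha \notin \Z$ and $\lambda \in \Z$ imply $n-\alpha \neq 0$ and $n-(\alpha+\lambda) \neq 0$.

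For the \emph{only if} direction of the isomorphism claim, the Casimir $c$ must act by the same scalar on both modules, giving $\lambda(\lambda-2) = \mu(\mu-2)$ and hence $\mu \in \{\lambda, -\lambda+2\}$. Since $\lambda \in \Z$ we have $-\lambda+2 \equiv \lambda \pmod{2\Z}$, so in either case the two $h$-weight sets $-2\alpha - \lambda + 2\Z$ and $-2\beta - \mu + 2\Z$ coincide precisely when $\alpha - \beta \in \Z$, which an isomorphism forces.

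For the \emph{if} direction, suppose $\mu \in \{\lambda,-\lambda+2\}$ and $\alpha-\beta \in \Z$; then $\beta \notin \Z$, so $\RVerma(\mu,\beta)$ is also simple by the first part. By the weight-set analysis, $\RVerma(\mu,\beta)$ contains a nonzero weight vector $v$ of weight $-2\alpha-\lambda$, for which $v(h+2\alpha+\lambda)=0$ is automatic. Using the formula from the simplicity step, evaluated at $\mu' = -2\alpha-\lambda$ with central character $\chi_\mu = \chi_\lambda$, one gets $v \cdot ef = -(\alpha+\lambda)(\alpha+1)v$, so $\overline{1} \mapsto v$ extends to a nonzero morphism of right $(\Uea\mathfrak{sl}_2)_{\chi_\lambda}$-modules $\RVerma(\lambda,\alpha) \to \RVerma(\mu,\beta)$, which is an isomorphism by simplicity of both sides. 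The main technical point is the verification when $\mu = -\lambda+2$: it reduces to the algebraic identity that the expression $(\lambda-\mu')(\lambda+\mu'-2)$ controlling the action of $ef$ on the weight $\mu'$ is invariant under $\lambda \mapsto -\lambda+2$, which is a short symmetry check but is the ingredient that ties the two halves of the Casimir-character ambiguity to the parametrization of $\RVerma(\lambda,\alpha)$.
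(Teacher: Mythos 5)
Your proposal is correct and follows essentially the same route as the paper: the scalars you derive for $\cdot ef$ and $\cdot fe$ on the weight spaces agree with the paper's formula \eqref{eq:twistedefandfeonweightspaces} (you obtain them from the Casimir and $[e,f]=h$ rather than directly from the presentation, which is the same computation), simplicity then follows as in Lemma \autoref{Lemma:RVermaalphasimple}, and the \emph{only if} direction is the identical central-character plus $h$-spectrum argument. The one substantive addition is that you actually prove the \emph{if} direction by exhibiting a weight vector annihilated by $h+2\alpha+\lambda$ and $ef+(\alpha+\lambda)(\alpha+1)$ and invoking simplicity on both sides; the paper's proof omits this half entirely (the relevant isomorphism $\RVerma(\lambda,\alpha)\cong\RVerma(-\lambda+2,\alpha+\lambda-1)$ is only recorded later, in Lemma \autoref{Lemma:RVermalambdaalpha}(4)), so your write-up is, if anything, more complete.
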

\begin{proof}
We conclude from \eqref{eq:RVermaalphalambda} for any submodule $M$ of $\RVerma(\lambda,\alpha)$
\begin{align}\label{eq:twistedefandfeonweightspaces}
\begin{split}
\cdot ef &=(n-\alpha)(\alpha-n-1+\lambda)\id_{M_{-2\alpha-\lambda+2(n+1)}}\\
\cdot fe &= (n-\alpha)(\alpha-n-1+\lambda)\id_{M_{-2\alpha-\lambda+2n}}
\end{split}
\end{align}
generalizing \eqref{eq:efandfeonweightspaces}.  $\RVerma(\lambda,\alpha)\cong \RVerma(\mu,\beta)$
implies that the modules have the same central character. Thus $\mu \in \{\lambda,-\lambda+2\}$.
As the weights of $\RVerma(\lambda,\alpha)$ are $-2\alpha-\lambda + 2\Z$ we conclude $\alpha -\beta \in \Z$.  
\end{proof}

\subsubsection{Description of $\RVerma(\lambda,\alpha)$, $\alpha \in \Z$}\label{ssec:descriptionofRVermalambdaalpha}
Before proceeding to the computation of the remaining $\Ho^0$ and $\Ho^1$, it is convenient to analyze $\RVerma(\lambda,\alpha)$
for $\alpha \in \Z$. We will find that for many values of the parameter $\alpha$ the 
$\RVerma(\lambda,\alpha)$ are isomorphic. Hence, we will introduce some notation for the
isomorphism classes.

\begin{Rem}\label{Rem:-onRVermalambdaalpha}
For any $\alpha \in \C$ $\RVerma(\lambda,\alpha)^- \cong \RVerma(\lambda,-\alpha-\lambda)$ follows from the definition
\eqref{eq:RVermaalphalambda}.
\end{Rem}

\begin{Lemma}\label{Lemma:RVermalambdaalpha}
Let $\alpha,\beta \in \Z$. 
\begin{enumerate}
\item \emph{Case $\lambda \geq 2$.}  There are three isomorphism classes of $\RVerma(\lambda,\alpha)$
given by $\alpha \leq -\lambda$, $\alpha \geq 0$ and $1-\lambda \leq \alpha \leq -1$ respectively. 
We denote them by $\RVerma(\lambda,<)$, $\RVerma(\lambda,>)$ and $\RVerma(\lambda,=)$.
We have exact sequences
\begin{align}
\label{eq:first<} & 0 \rightarrow \MVerma(-\lambda)^- \rightarrow \RVerma(\lambda,<) \rightarrow \MVerma(\lambda-2)^{\vee} \rightarrow 0\\
\label{eq:second<} & 0 \rightarrow \MVerma(\lambda-2)^- \rightarrow \RVerma(\lambda,<) \rightarrow \MVerma(-\lambda) \rightarrow 0\\
\label{eq:first>} & 0 \rightarrow \MVerma(-\lambda) \rightarrow \RVerma(\lambda,>) \rightarrow \MVerma(\lambda-2)^{-\vee} \rightarrow 0\\
\label{eq:second>} & 0 \rightarrow \MVerma(\lambda-2) \rightarrow \RVerma(\lambda,>) \rightarrow \MVerma(-\lambda)^- \rightarrow 0\\
\label{eq:first=} & 0 \rightarrow \MVerma(-\lambda) \rightarrow \RVerma(\lambda,=) \rightarrow \MVerma(\lambda-2)^- \rightarrow 0\\
\label{eq:second=} & 0 \rightarrow \MVerma(-\lambda)^- \rightarrow \RVerma(\lambda,=) \rightarrow \MVerma(\lambda-2) \rightarrow 0\;.
\end{align}

\item \emph{Case $\lambda \leq 0$.}
There are three isomorphism classes of $\RVerma(\lambda,\alpha)$ given by $\alpha \leq -1$, $\alpha \geq 1-\lambda$ and $0 \leq \alpha \leq -\lambda$
respectively. We denote them by $\RVerma(\lambda,<)$, $\RVerma(\lambda,>)$ and $\RVerma(\lambda,=)$.
Of course we have similar exact sequences as in (1).

\item There are two isomorphism classes of $\RVerma(1,\alpha)$ given by $\alpha \leq -1$ and $\alpha \geq 0$ respectively.
We denote them by $\RVerma(1,<)$ and $\RVerma(1,>)$.
We have exact sequences
\begin{align}
\label{eq:1<} & 0 \rightarrow \MVerma(-1)^- \rightarrow \RVerma(1,<) \rightarrow \MVerma(-1) \rightarrow 0\\
\label{eq:1>} & 0 \rightarrow \MVerma(-1) \rightarrow \RVerma(1,>) \rightarrow \MVerma(-1)^- \rightarrow 0\;.
\end{align}

\item Let $\lambda \neq \mu$. $\RVerma(\lambda,\alpha)\cong \RVerma(\mu,\beta)$
if and only if $\mu=-\lambda+2$ and $\alpha$ and $\beta$ belong to corresponding
isomorphism classes in (1) and (2). 

\item We have $\RVerma(\lambda,<)^- \cong \RVerma(\lambda,<)^{\vee} \cong  \RVerma(\lambda,>)$
and $\RVerma(\lambda,=)^- \cong \RVerma(\lambda,=)$. $(\cdot)^-$ exchanges \eqref{eq:first<} with \eqref{eq:first>}, \eqref{eq:second<} with \eqref{eq:second>}, \eqref{eq:first=} with \eqref{eq:second=} and \eqref{eq:1<} with \eqref{eq:1>}. 
$(\cdot)^{\vee}$ exchanges \eqref{eq:first<} with \eqref{eq:second>}, \eqref{eq:second<} with \eqref{eq:first>}
and \eqref{eq:1<} with \eqref{eq:1>}.

\end{enumerate}
\end{Lemma}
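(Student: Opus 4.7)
The plan is to read the structure of $\RVerma(\lambda, \alpha)$ directly off the scalar formula \eqref{eq:twistedefandfeonweightspaces}, then identify the filtration and its subquotients. By PBW (exactly as in the untwisted case recorded after \eqref{eq:defofRVerma}), $\RVerma(\lambda, \alpha)$ has basis $\{\bar{1} e^a : a \geq 0\} \cup \{\bar{1} f^b : b \geq 1\}$, so its weight spaces are one-dimensional at the weights in $\mu_0 + 2\Z$, where $\mu_0 := -2\alpha - \lambda$ is the weight of $\bar{1}$. Setting $v_{\mu_0 - 2a} := \bar{1} e^a$ and $v_{\mu_0 + 2b} := \bar{1} f^b$, the action $\cdot e : M_\mu \to M_{\mu - 2}$ (resp. $\cdot f : M_\mu \to M_{\mu + 2}$) has coefficient $1$ for $\mu \leq \mu_0$ (resp. $\mu \geq \mu_0$), and the coefficients on the other side are fixed by \eqref{eq:twistedefandfeonweightspaces}. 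These remaining coefficients vanish precisely at the ``break weights'' $\mu \in \{\lambda, 2 - \lambda\}$ (for $\cdot e$ above $\mu_0$) and $\mu \in \{-\lambda, \lambda - 2\}$ (for $\cdot f$ below $\mu_0$).

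For part (1), $\lambda \geq 2$: a break below $\mu_0$ on the $\cdot f$ side produces a vector $v_\mu$ killed by $f$, generating the submodule of weights $\leq \mu$; symmetrically for a $\cdot e$ break above $\mu_0$. The three ranges correspond to the three possible distributions of the four break weights relative to $\mu_0$: both of $\{-\lambda, \lambda - 2\}$ lie below $\mu_0$ precisely when $\alpha \leq -\lambda$, both of $\{2 - \lambda, \lambda\}$ lie above $\mu_0$ precisely when $\alpha \geq 0$, and exactly one on each side when $1 - \lambda \leq \alpha \leq -1$. In the first range, the nested submodules $N_1 \subset N_0 \subset \RVerma(\lambda, <)$ generated by $v_{-\lambda}$ and $v_{\lambda - 2}$ have subquotients $N_1 \cong \MVerma(-\lambda)^-$, $N_0/N_1 \cong \Lsimple(\lambda - 2)$ (finite-dimensional with weights $2 - \lambda, \ldots, \lambda - 2$), and $\RVerma/N_0 \cong \MVerma(-\lambda)$ (simple because $-\lambda \notin \Z_{\geq 0}$). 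The short filtrations $0 \subset N_1 \subset \RVerma$ and $0 \subset N_0 \subset \RVerma$ then yield \eqref{eq:first<} and \eqref{eq:second<} after identifying $N_0 \cong \MVerma(\lambda - 2)^-$ (the $(\cdot)^-$-twist of the classical Verma sequence $0 \to \MVerma(-\lambda) \to \MVerma(\lambda - 2) \to \Lsimple(\lambda - 2) \to 0$) and $\RVerma/N_1 \cong \MVerma(\lambda - 2)^\vee$. The latter holds because both sides are non-split length-two extensions with socle $\Lsimple(\lambda - 2)$ and top $\MVerma(-\lambda)$: $\RVerma/N_1$ is non-split because $v_\lambda \cdot e = v_{\lambda - 2} \neq 0$ in it, which rules out a weight-$\lambda$ vector killed by $e$ in any summand; and $\MVerma(\lambda - 2)^\vee$ is the unique such non-split extension, obtained by dualizing the classical Verma sequence and invoking $\MVerma(-\lambda) \cong \MVerma(-\lambda)^\vee$ from \autoref{ssec:duality}. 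Independence of the isomorphism class of $\RVerma(\lambda, \alpha)$ on $\alpha$ within a given range is established by defining a map $\RVerma(\lambda, \alpha_1) \to \RVerma(\lambda, \alpha_2)$ that sends $\bar{1}$ to a nonzero vector at weight $-2\alpha_1 - \lambda$ in the target (well defined because on any module of central character $\chi_\lambda$, $ef$ acts on the weight-$\mu$ space by a scalar depending only on $\mu$ and $\lambda$, which at $\mu = -2\alpha_1 - \lambda$ equals $-(\alpha_1 + \lambda)(\alpha_1 + 1)$); surjectivity together with equality of composition lengths then forces bijectivity.

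The range $\alpha \geq 0$ follows by applying $(\cdot)^-$ via Remark \autoref{Rem:-onRVermalambdaalpha}, which bijects that range with $\alpha \leq -\lambda$ and converts \eqref{eq:first<}, \eqref{eq:second<} into \eqref{eq:second>}, \eqref{eq:first>}. The middle range $1 - \lambda \leq \alpha \leq -1$ has $v_{-\lambda}$ (killed by $f$) and $v_\lambda$ (killed by $e$) on opposite sides of $\mu_0$; the two short filtrations produce \eqref{eq:first=}, \eqref{eq:second=}, and the self-duality $\RVerma(\lambda, =)^- \cong \RVerma(\lambda, =)$ is immediate from Remark \autoref{Rem:-onRVermalambdaalpha}. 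Parts (2) and (3) are the same analysis in their respective regimes: (2) is the mirror $\lambda \leq 0$ case, and (3) is the degenerate $\lambda = 1$ case where the four break weights collapse to $\{-1, 1\}$, leaving only one break and hence only two classes. Part (4) follows from $\chi_\lambda = \chi_\mu$ being equivalent to $\mu \in \{\lambda, 2 - \lambda\}$ together with the observation that the break set $\{\pm \lambda, \pm(\lambda - 2)\}$ is invariant under $\lambda \mapsto 2 - \lambda$, forcing the stated class correspondence. Part (5) is an immediate consequence of Remark \autoref{Rem:-onRVermalambdaalpha} for $(\cdot)^-$ combined with naturality of $(\cdot)^\vee$ applied to the established exact sequences, using $\MVerma(-\lambda) \cong \MVerma(-\lambda)^\vee$ and $\Lsimple(\lambda - 2) \cong \Lsimple(\lambda - 2)^\vee$. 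The main obstacle is the identification $\RVerma/N_1 \cong \MVerma(\lambda - 2)^\vee$ (and its $(\cdot)^-$-twin), which relies on $\Exten^1(\MVerma(-\lambda), \Lsimple(\lambda - 2))$ being one-dimensional to pin down the non-split extension; the rest of the argument is explicit weight-space bookkeeping over the three ranges.
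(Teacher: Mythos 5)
Your proposal is correct in substance and rests on the same engine as the paper's proof: the formula \eqref{eq:twistedefandfeonweightspaces} shows that $\RVerma(\lambda,\alpha)$ is a chain of one-dimensional weight spaces in which the arrows $\cdot e$ and $\cdot f$ are invertible except at the break weights $\nu\in\{\lambda,-\lambda+2\}$, where exactly one of the two vanishes, and the position of the generator's weight $-2\alpha-\lambda$ relative to the breaks determines which one. The paper stops essentially there: three admissible vanishing patterns give three isomorphism classes, the ranges of $\alpha$ follow from the weight of $\overline{1}$, and the exact sequences and part (5) are read off from the pattern, with the key observation for $(\cdot)^{\vee}$ being that duality transposes the arrows, i.e.\ $\RVerma(\lambda,\alpha)_{\nu-2}f=0$ iff $(\RVerma(\lambda,\alpha)^{\vee})_{\nu}e=0$. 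You go further than the paper in making the submodule filtration $N_1\subset N_0$ explicit and verifying the subquotients, and your argument for independence of $\alpha$ within a range (an explicit cyclic map plus equality of weight multiplicities) is a legitimate, slightly more hands-on substitute for the paper's implicit ``same vanishing pattern implies isomorphic.''

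The one place where your route genuinely diverges is the identification $\RVerma/N_1\cong\MVerma(\lambda-2)^{\vee}$ and, downstream of it, the $(\cdot)^{\vee}$ statements in part (5). You pin down $\MVerma(\lambda-2)^{\vee}$ as the unique non-split extension of $\MVerma(-\lambda)$ by $\Lsimple(\lambda-2)$, which requires $\dim\Exten^1(\MVerma(-\lambda),\Lsimple(\lambda-2))=1$; this is true and standard for $\mathfrak{sl}_2$, but you assert it rather than prove or cite it, and you flag it yourself as the main obstacle. More seriously, in part (5) ``naturality of $(\cdot)^{\vee}$ applied to the established exact sequences'' does not by itself yield $\RVerma(\lambda,<)^{\vee}\cong\RVerma(\lambda,>)$: dualizing \eqref{eq:first<} produces a short exact sequence of the same shape as \eqref{eq:second>}, but now with a non-simple kernel $\MVerma(\lambda-2)$, so concluding that the middle terms agree needs a further uniqueness-of-extension input that you have not established. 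Both soft spots disappear if you instead use the paper's transposition observation: $(\cdot)^{\vee}$ sends the vanishing pattern of $\RVerma(\lambda,<)$ (both $f$-arrows zero at the breaks) to that of $\RVerma(\lambda,>)$ (both $e$-arrows zero), and a cyclic weight module with one-dimensional weight spaces on $\lambda+2\Z$ and a prescribed vanishing pattern is determined up to isomorphism by that pattern. With that one-line repair your argument is complete.
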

\begin{proof}
(1). From \eqref{eq:twistedefandfeonweightspaces} we deduce $\cdot ef = 0$
and $\cdot fe = 0$ if and only if $n=\alpha$ or $n=\alpha-1+\lambda$. Thus,
for $\nu \notin \{\lambda,-\lambda+2\}$ the arrows
\begin{align*}
\xymatrix{ \RVerma(\lambda,\alpha)_{\nu} \ar@<5pt>[r]^{\cdot e}
& \ar[l]^{\cdot f} \RVerma(\lambda,\alpha)_{\nu-2}}
\end{align*}
are invertible and for $\nu \in \{\lambda,-\lambda+2\}$ exactly
one of the arrows is zero (they cannot both be zero as $\RVerma(\lambda,\alpha)$ is 
not a direct sum). Further, $f=0$ for $\nu = \lambda$
together with $e=0$ for $\nu = -\lambda+2$ cannot occur since $\RVerma(\lambda,\alpha)$ is
cyclic.  We are left with three cases. In each case the allowed range for $\alpha$
follows from the fact that the generator $\overline{1}$ of $\RVerma(\lambda,\alpha)$ 
has weight $-2\alpha-\lambda$.\newline
(2) and (3). See the proof of (1).\newline
(4) $\RVerma(\lambda,\alpha)\cong \RVerma(\mu,\beta)$ implies that the modules have the
same central character. Thus $\mu =-\lambda+2$. From the definition \eqref{eq:RVermaalphalambda}
we see $\RVerma(\lambda,\alpha) \cong \RVerma(-\lambda+2,\alpha+\lambda-1)$.
The remaining statements follows from (1) and (2).\newline
(5). Remark \autoref{Rem:-onRVermalambdaalpha} and (1)-(3) imply $\RVerma(\lambda,<)^- \cong \RVerma(\lambda,>)$
and $\RVerma(\lambda,=)^-\cong \RVerma(\lambda,=)$. $\RVerma(\lambda,<)^{\vee} \cong \RVerma(\lambda,>)$
follows from the proof of (1) and the fact that the definition of $(\cdot)^{\vee}$ 
implies $\RVerma(\lambda,\alpha)_{\nu-2} f = 0$ if and only if $(\RVerma(\lambda,\alpha)^{\vee})_{\nu}e = 0$.
\end{proof}
The subquotient structure of the $\RVerma(\lambda,\alpha)$ is easily deduced from this lemma together
with the subquotient structure of the Verma and dual Verma module. 
E.g. for $\lambda \geq 2$ the simple modules $\MVerma(-\lambda), \MVerma(-\lambda)^-$ and $\Lsimple(\lambda-2)$
occur with multiplicity one, while for $\lambda = 1$
the simple modules $\MVerma(-1)$ and $\MVerma(-1)^-$ occur with multiplicity one. 

\subsubsection{$\Ho^0$ and $\Ho^1$ of $j_{\cdot}\Omega_{\C^{\times}}$}
We find 
\begin{Thm}\label{Thm:jdotOmega}
\begin{align*}
\Ho^0(j_{\cdot}\Omega_{\C^{\times}})=\Omega_{\C^{\times}}(\C^{\times})\cong 
\begin{cases} \RVerma(\lambda,=) & \lambda \leq 0 \\ \RVerma(\lambda,=)^{\vee} & \lambda \geq 2 \\ \MVerma(-1)\oplus\MVerma(-1)^-
& \lambda = 1\end{cases}\qquad \Ho^1(j_{\cdot}\Omega_{\C^{\times}})=0\;.
\end{align*}\qed
\end{Thm}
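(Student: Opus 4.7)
The plan is to extract $\Ho^{\bullet}$ from the Čech complex \eqref{eq:sheafCechcomplex} for the affine cover $\{\C_x, \C_z\}$ and then identify the $(\Uea\mathfrak{sl}_2)_{\chi_{\lambda}}$-module structure on $\Ho^0$ via \eqref{eq:twistedfirstorder}. Trivializing $\sh{O}(\lambda)$ by $z_0^{\lambda}$ on $\C_x$ and by $z_1^{\lambda}$ on $\C_z$, the transition on $\C^{\times}$ is $z_1^{\lambda} = x^{\lambda} z_0^{\lambda}$, so the Čech differential becomes $(s_1, s_2) \mapsto s_1 - x^{\lambda} s_2$ as a map $\C[x,x^{-1}]\diff x \oplus \C[z,z^{-1}]\diff z \to \C[x,x^{-1}]\diff x$. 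Setting $s_2 = 0$ already shows surjectivity, so $\Ho^1 = 0$ and $\Ho^0 \cong \Omega_{\C^{\times}}(\C^{\times}) = \bigoplus_n \C\omega_n$ as a vector space with $\omega_n := x^n\diff x$. Feeding \eqref{eq:twistedfirstorder} into this identification gives the explicit right action $\omega_n \cdot e = -n\omega_{n-1}$, $\omega_n \cdot h = (2n+2-\lambda)\omega_n$, $\omega_n \cdot f = (n+2-\lambda)\omega_{n+1}$.

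For $\lambda \leq 0$, I would verify that $\omega_{-1}$ (weight $-\lambda$) is annihilated by $h+\lambda$ and $ef+\lambda$, producing a map $\RVerma(\lambda, 0) \to \Ho^0$ sending the cyclic generator to $\omega_{-1}$. The $e$-chain $\omega_{-1}, \omega_{-2}, \ldots$ going down avoids the vanishing $n=0$, and the $f$-chain $\omega_0, \omega_1, \ldots$ going up has coefficients $n+2-\lambda \geq 1$ throughout this range, so $\omega_{-1}$ generates $\Ho^0$. One-dimensional weight spaces on both sides at weights $-\lambda + 2\Z$ promote the surjection to an isomorphism, and Lemma \autoref{Lemma:RVermalambdaalpha}(2) identifies $\RVerma(\lambda, 0)$ with $\RVerma(\lambda, =)$. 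For $\lambda = 1$ the coefficients force $\omega_0 \cdot e = 0 = \omega_{-1}\cdot f$, exhibiting $\omega_0$ and $\omega_{-1}$ as lowest-weight generators of $\MVerma(-1)$ and $\MVerma(-1)^-$ respectively, whose weight supports (positive and negative odd integers) partition the weights of $\Ho^0$ and deliver the direct sum.

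For $\lambda \geq 2$ the cyclic-generation argument breaks (from $\omega_{-1}$, the $f$-chain stalls at $\omega_{\lambda-2}$ where the coefficient vanishes), so I would instead feed \eqref{eq:sesforjdotOmegaCtimes} into the long exact sequence of sheaf cohomology. Combining Theorem \autoref{Thm:hwmodulestwisted} with the $(\cdot)^-$-symmetry of Lemma \autoref{Lemma:autoequivalenceintertwinedbyHo} (which gives $\Ho^0(\iota_{z*}\C) \cong \MVerma(-\lambda)^-$) produces $0 \to \MVerma(\lambda-2)^{\vee} \to \Ho^0 \to \MVerma(-\lambda)^- \to 0$, precisely the dualization of \eqref{eq:second=}. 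To single out $\RVerma(\lambda,=)^{\vee}$ I would verify non-splitness by computing the socle-connecting actions $\omega_{\lambda-1}\cdot e = -(\lambda-1)\omega_{\lambda-2}$ and $\omega_{-1}\cdot f = (1-\lambda)\omega_0$, both nonzero and landing in the simple socle $\Lsimple(\lambda-2) = \langle \omega_0, \ldots, \omega_{\lambda-2}\rangle \subset \MVerma(\lambda-2)^{\vee}$.

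The hard part will be this last step: non-splitness alone narrows but does not immediately pin down the isomorphism class. One must additionally know that $\Ext^1(\MVerma(-\lambda)^-, \MVerma(\lambda-2)^{\vee})$ is one-dimensional in the category of weight modules with central character $\chi_{\lambda}$. This is a standard $\mathfrak{sl}_2$ Ext computation (for instance via a length-two costandard resolution of $\MVerma(\lambda-2)^{\vee}$) that, once invoked, closes the $\lambda \geq 2$ case and thereby the theorem.
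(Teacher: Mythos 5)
Your computation is correct and takes essentially the same route as the paper, which proves this theorem (like the others) by direct computation with the Cech complex and records only the resulting exact sequence $0 \to \MVerma(\lambda-2)^{\vee} \to \Ho^0(j_{\cdot}\Omega_{\C^{\times}}) \to \MVerma(-\lambda)^- \to 0$ after the statement. Your explicit action $\omega_n e = -n\omega_{n-1}$, $\omega_n h = (2n+2-\lambda)\omega_n$, $\omega_n f = (n+2-\lambda)\omega_{n+1}$ is consistent with \eqref{eq:twistedfirstorder}, and the cases $\lambda \leq 0$ and $\lambda = 1$ are complete. The one soft spot is the endgame for $\lambda \geq 2$: you defer to the claim that $\Exten^1(\MVerma(-\lambda)^-,\MVerma(\lambda-2)^{\vee})$ is one-dimensional, which you leave unproved (it is true --- the extension is governed by the single scalar $vf \in (\MVerma(\lambda-2)^{\vee})_{-\lambda+2}$ for the canonical lift $v$ of the weight-$(-\lambda)$ generator of the quotient, the sub having no weight $-\lambda$ --- but as written it is an IOU, and you would additionally need to check that $\RVerma(\lambda,=)^{\vee}$ is itself a non-split extension of this shape). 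This detour is unnecessary: the formulas you already have close the case directly. Applying $(\cdot)^{\vee}$ gives $\omega_n^{*} e = (n+1-\lambda)\omega_{n-1}^{*}$ and $\omega_n^{*} f = -(n+1)\omega_{n+1}^{*}$; for $\lambda \geq 2$ the vector $\omega_0^{*}$ of weight $2-\lambda$ generates $(\Ho^0)^{\vee}$ (all coefficients along both chains are nonzero) and satisfies $\omega_0^{*}(h+\lambda-2)=0=\omega_0^{*}\, ef$, i.e.\ the defining relations \eqref{eq:RVermaalphalambda} of $\RVerma(\lambda,-1)=\RVerma(\lambda,=)$, so the resulting surjection is an isomorphism by the one-dimensionality of all weight spaces --- exactly the argument you already ran for $\lambda \leq 0$. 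This yields $\Ho^0 \cong \RVerma(\lambda,=)^{\vee}$ with no Ext computation.
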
 
Since $\Ho^1(j_{x\cdot}\Omega_{\C_x})=0$ the long exact sequence of cohomology of \eqref{eq:sesforjdotOmegaCtimes} 
and \eqref{eq:twistedjxextensions} give the exact sequence
\begin{align*}
0 \rightarrow \MVerma(\lambda-2)^{\vee} \rightarrow \Ho^0(j_{\cdot}\Omega_{\C^{\times}}) \rightarrow \MVerma(-\lambda)^- \rightarrow 0\;.
\end{align*}
Applying $(\cdot)^-$ we obtain the exact sequence
\begin{align*}
0 \rightarrow \MVerma(\lambda-2)^{-\vee} \rightarrow \Ho^0(j_{\cdot}\Omega_{\C^{\times}}) \rightarrow \MVerma(-\lambda) \rightarrow 0\;.
\end{align*}

\subsubsection{$\Ho^0$ and $\Ho^1$ of $j_!\Omega_{\C^{\times}}$}
We find
\begin{Thm}\label{Thm:j!Omegaalpha}
\begin{align*}
\Ho^0(j_!\Omega_{\C^{\times}})\cong \begin{cases} \RVerma(\lambda,=) & \lambda \geq 2\\ \MVerma(-\lambda)\oplus\MVerma(-\lambda)^- &
\lambda \leq 1 \end{cases}\quad
\Ho^1(j_!\Omega_{\C^{\times}})\cong \begin{cases} 0 & \lambda \geq 1 \\ \Lsimple(-\lambda) & \lambda \leq 0 \end{cases}\;.
\end{align*}\qed
\end{Thm}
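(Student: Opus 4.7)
The plan is to compute $\Ho^0$ and $\Ho^1$ via the Cech complex \eqref{eq:sheafCechcomplex} for the affine cover $\{\C_x,\C_z\}$, mirroring the untwisted $\lambda=0$ analysis of \autoref{ssec:Ho0andHo1ofj!}. As a first step I identify the restrictions: applying $(\cdot)^-$ gives $(j_!\Omega_{\C^\times})\vert\C_x\cong j_{z!}\Omega_{\C_z}\vert\C_x$, and the $x\leftrightarrow z$ analogue of \eqref{eq:sesforjxshriek} then yields, in the trivializations $z_0^\lambda$ on $\C_x$ and $z_1^\lambda$ on $\C_z$,
\[
(j_!\Omega_{\C^\times})(\C_x)\cong\sh{D}_{\C_x}(\C_x)/\partial_x x\sh{D}_{\C_x}(\C_x),\qquad
(j_!\Omega_{\C^\times})(\C_z)\cong\sh{D}_{\C_z}(\C_z)/\partial_z z\sh{D}_{\C_z}(\C_z),
\]
while the overlap is $\C[x,x^{-1}]\diff x$; the two trivializations differ on $\C^\times$ by multiplication by $x^\lambda$.

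Next I decompose each of the three spaces into $h$-weight spaces using \eqref{eq:twistedfirstorder}. In $\sh{D}_{\C_x}/\partial_x x\sh{D}_{\C_x}$ the single defining relation forces $\overline{x^{a+1}\partial_x^{b+1}}=-(a+1)\overline{x^a\partial_x^b}$, and a short induction gives the basis $\{\overline{x^m}\}_{m\geq 0}\cup\{\overline{\partial_x^n}\}_{n\geq 1}$ with $h$-weights $2m+2-\lambda$ respectively $-2n+2-\lambda$, and symmetrically on $\C_z$. Thus each weight space of the Cech source is two-dimensional and of the target one-dimensional, and the Cech differential is a collection of maps $\C^2\to\C^1$. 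After accounting for the $x^\lambda$ transition, a direct rank computation shows the differential fails to surject exactly on the weights in $\{\lambda,\lambda+2,\ldots,-\lambda\}$ when $\lambda\leq 0$, and surjects in every weight when $\lambda\geq 1$. Assembling the cokernels and identifying the $\mathfrak{sl}_2$-action yields $\Ho^1$ as claimed: zero for $\lambda\geq 1$, and a finite-dimensional weight module with weights in $[\lambda,-\lambda]$ that the $\mathfrak{sl}_2$-action from \eqref{eq:twistedfirstorder} identifies with $\Lsimple(-\lambda)$ for $\lambda\leq 0$.

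For $\Ho^0$ I read off the kernels. For $\lambda\leq 1$ each weight's kernel splits into independent contributions supported on $\C_x$ (inside the $\iota_{z*}\C$ submodule of $(j_!\Omega_{\C^\times})\vert\C_x$) and on $\C_z$ (inside $\iota_{x*}\C$); these generate separate copies of $\MVerma(-\lambda)^-$ respectively $\MVerma(-\lambda)$ under the $\mathfrak{sl}_2$-action, yielding the direct sum. For $\lambda\geq 2$ the long exact sequence attached to \eqref{eq:sesforjshriekOmegaCtimes}, together with Theorem \autoref{Thm:hwmodulestwisted} and Lemma \autoref{Lemma:autoequivalenceintertwinedbyHo} applied to $\iota_{z*}\C\cong(\iota_{x*}\C)^-$, produces the extension
\[
0\to\MVerma(-\lambda)^-\to\Ho^0(j_!\Omega_{\C^\times})\to\MVerma(\lambda-2)\to 0,
\]
which by Lemma \autoref{Lemma:RVermalambdaalpha} and \eqref{eq:second=} is isomorphic to $\RVerma(\lambda,=)$ if and only if it is non-split.

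The main obstacle is verifying non-splitness in case $\lambda\geq 2$. A splitting would realize a lift $\tilde v$ of the generator of $\MVerma(\lambda-2)$ in $\Ho^0$ that is annihilated by $e$. From the Cech description one identifies $\tilde v$ with the 1-dimensional kernel at weight $2-\lambda$, concretely the cocycle $\overline{1}_{\C_x}+\overline{z^{\lambda-2}}_{\C_z}$; a direct computation using \eqref{eq:twistedfirstorder} then shows $\overline{1}_{\C_x}\cdot\partial_x=\overline{\partial_x}_{\C_x}$ is a nonzero class supported in the $\iota_{z*}\C$ part of the $\C_x$-chart, while $\overline{z^{\lambda-2}}_{\C_z}\cdot(-z^2\partial_z-\lambda z)=0$ upon reducing modulo $\partial_z z\sh{D}_{\C_z}$. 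Hence $\tilde v\cdot e\neq 0$, the extension is non-split, and $\Ho^0(j_!\Omega_{\C^\times})\cong\RVerma(\lambda,=)$.
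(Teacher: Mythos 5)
Your proposal is correct and follows essentially the same route as the paper, which proves all of its main theorems (including this one, stated with an immediate \qed) by exactly this kind of direct weight-by-weight analysis of the Cech complex \eqref{eq:Cechcomplex} using the chart identifications $\sh{D}_{\C_x}/\partial_x x\sh{D}_{\C_x}$, $\sh{D}_{\C_z}/\partial_z z\sh{D}_{\C_z}$ and the twisted generators \eqref{eq:twistedfirstorder}. Your explicit non-splitness check for $\lambda\geq 2$ (computing $\tilde v\cdot e\neq 0$ on the weight-$(2-\lambda)$ cocycle) supplies a detail the paper leaves implicit, and it is sound.
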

Since $\Ho^1(\iota_{z*}\C)=0$ the
long exact sequence of cohomology of \eqref{eq:sesforjshriekOmegaCtimes} and \eqref{eq:twistedjxextensions} give
the exact sequences
\begin{align*}
& 0 \leftarrow \MVerma(\lambda-2) \leftarrow \RVerma(\lambda,=) \leftarrow \MVerma(-\lambda)^- \leftarrow 0 &\lambda \geq 2\\
& 0 \leftarrow \MVerma(-\lambda) \leftarrow \MVerma(-\lambda)\oplus\MVerma(-\lambda)^- \leftarrow \MVerma(-\lambda)^- \leftarrow 0
& \lambda \leq 1\;.
\end{align*}

\subsubsection{$\Ho^0$ and $\Ho^1$ of $j_{!x\cdot z}\Omega_{\C^{\times}}$
and $j_{\cdot x!z}\Omega_{\C^{\times}}$}
We find
\begin{Thm}\label{Thm:j!dottwisted}
\begin{align*}
& \Ho^0(j_{!x\cdot z}\Omega_{\C^{\times}}) \cong \RVerma(\lambda,<)\quad \Ho^1(j_{!x\cdot z}\Omega_{\C^{\times}})=0\\
& \Ho^0(j_{\cdot x !z}\Omega_{\C^{\times}})\cong \RVerma(\lambda,>) \quad \Ho^1(j_{\cdot x! z}\Omega_{\C^{\times}})=0\;.
\end{align*}\qed
\end{Thm}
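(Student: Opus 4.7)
The plan is to compute directly with the Cech complex \eqref{eq:Cechcomplex} for the standard cover $\{\C_x,\C_z\}$, parallel to the proofs of Theorems \autoref{Thm:jdotOmega} and \autoref{Thm:j!Omegaalpha}, and then deduce the statement about $j_{\cdot x!z}\Omega_{\C^{\times}}$ from that about $j_{!x\cdot z}\Omega_{\C^{\times}}$ via the auto-equivalence $(\cdot)^-$.

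First I would describe the restrictions. By the definition of $j_{!x\cdot z}$, with the $\sh{O}(\lambda)$-twist trivialized by $z_0^\lambda$ on $\C_x$ and by $z_1^\lambda$ on $\C_z$, one has $j_{!x\cdot z}\Omega_{\C^{\times}}\vert\C_x \cong \sh{D}_{\C_x}/\partial_x x\sh{D}_{\C_x}$ (the $!$-extension at $x=0$, cf.\ \eqref{eq:sesforjxshriek}) and $j_{!x\cdot z}\Omega_{\C^{\times}}\vert\C_z \cong \sh{D}_{\C_z}/z\partial_z\sh{D}_{\C_z}$ (the $\cdot$-extension at $z=0$, cf.\ \eqref{eq:sesonCz}); both restrict to $\Omega_{\C^{\times}}$ on $\C^{\times}$.

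Next I would analyze the Cech differential weight by weight. By \eqref{eq:twistedfirstorder}, $h$ acts as $-2x\partial_x-\lambda$ in the $\C_x$ trivialization and as $2z\partial_z+\lambda$ in the $\C_z$ trivialization; each of $\sh{F}(\C_x)$, $\sh{F}(\C_z)$ and $\sh{F}(\C^{\times})$ is one-dimensional in every $h$-weight (with explicit bases $\{\overline{x^k},\overline{\partial_x^k}\}$, $\{\overline{z^k},\overline{\partial_z^k}\}$, and $\{x^k\diff x\}$ respectively), so the Cech differential at each weight is a linear map $\C^2 \to \C$. A direct computation of the restriction maps, tracking the identifications $\overline{1}_{\C_x}\leftrightarrow \diff x$ and $\overline{1}_{\C_z}\leftrightarrow -\diff x/x$ in $\Omega_{\C^{\times}}$, shows it is nonzero at every weight, hence surjective, yielding $\Ho^1 = 0$ and leaving a one-dimensional weight space in $\Ho^0$ for every weight in $-\lambda+2\Z$. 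A further direct check using \eqref{eq:twistedefandfeonweightspaces} with $\alpha=-\lambda$ (resp.\ suitable $\alpha$ for $\lambda\leq 0$ and $\lambda=1$) shows $e$ does not annihilate the weight-$\lambda$ vector, so $\Ho^0$ is cyclic as an $(\Uea\mathfrak{sl}_2)_{\chi_\lambda}$-module; matching the weight of this cyclic generator with the $<$-ranges of Lemma \autoref{Lemma:RVermalambdaalpha}(1)--(3) identifies $\Ho^0(j_{!x\cdot z}\Omega_{\C^{\times}})$ with $\RVerma(\lambda,<)$. For $j_{\cdot x!z}\Omega_{\C^{\times}}$ I would invoke $(j_{!x\cdot z}\Omega_{\C^{\times}})^- \cong j_{\cdot x!z}\Omega_{\C^{\times}}$, which holds because $I: x\mapsto x^{-1}$ swaps $\C_x$ and $\C_z$ (hence $!$ and $\cdot$ in the definition) while $\Omega_{\C^{\times}}$ is fixed; applying the twisted version of Lemma \autoref{Lemma:autoequivalenceintertwinedbyHo} together with $\RVerma(\lambda,<)^- \cong \RVerma(\lambda,>)$ from Lemma \autoref{Lemma:RVermalambdaalpha}(5) gives the second line.

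The main obstacle is showing that $\Ho^0(j_{!x\cdot z}\Omega_{\C^{\times}})$ is actually cyclic, rather than merely possessing the correct composition factors. Feeding \eqref{eq:sesforjdotx!zOmegaCtimes} and Theorem \autoref{Thm:hwmodulestwisted} into a long exact sequence of cohomology recovers the composition factors (reproducing the exact sequences of Lemma \autoref{Lemma:RVermalambdaalpha}), but without the explicit Cech-level verification that $e$ does not vanish on the weight-$\lambda$ vector, one cannot exclude a split extension such as $\MVerma(\lambda-2)^-\oplus\MVerma(-\lambda)$; the nonvanishing of the structure constants in \eqref{eq:twistedefandfeonweightspaces} is precisely what rules this out.
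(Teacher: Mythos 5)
Your proposal is correct and follows essentially the same route as the paper: the theorem is established by the direct Cech computation on the cover $\{\C_x,\C_z\}$ (with the $\cdot$-side restriction already surjective, giving $\Ho^1=0$, and the weight-by-weight identification of $\Ho^0$ with $\RVerma(\lambda,<)$ via Lemma \ref{Lemma:RVermalambdaalpha}), and the second line is deduced exactly as you say from $(j_{!x\cdot z}\Omega_{\C^{\times}})^-\cong j_{\cdot x!z}\Omega_{\C^{\times}}$ together with Lemma \ref{Lemma:autoequivalenceintertwinedbyHo} and Lemma \ref{Lemma:RVermalambdaalpha}(5). Only note that the cyclic generator sits at weight $\lambda$ only for $\lambda\geq 1$ (for $\lambda\leq 0$ it is the weight-$(2-\lambda)$ vector $\overline{1}$), a case distinction you correctly flag.
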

The statement for $j_{!x\cdot z}$ implies the one for $j_{\cdot x!z}$ because of $j_{\cdot x!z}\Omega_{\C^{\times}} \cong (j_{!x\cdot z}\Omega_{\C^{\times}})^-$,
Lemma \autoref{Lemma:autoequivalenceintertwinedbyHo} and Lemma \autoref{Lemma:RVermalambdaalpha}(5).

\begin{Rem}
The embeddings $\kappa = \iota_{x,z}, j_{x,z},j: Y \hookrightarrow \Proj^1$ we consider are all affine.
Let $\sh{M}$ be a $\sh{D}_Y$-module. Then $\Ho^1(Y,\sh{M})=0$ implies in a uniform way that the direct image $\kappa_* \sh{M}
= \iota_{x,z*}\sh{M}, j_{x,z\cdot}\sh{M}, j_{\cdot}\sh{M}$ in $\mod \sh{D}(\lambda)$ satisfies $\Ho^1(\kappa_*\sh{M})=0$. 
\end{Rem}

\begin{Rem}\label{Rem:BBtwisted}
The assignment $\sh{M} \mapsto \sh{M}\otimes_{\sh{O}}\sh{O}(\lambda)$
defines an exact equivalence $\modulecat\sh{D} \rightarrow \modulecat\sh{D}(\lambda)$.
Thus, on the level of $\sh{D}$-modules, all twists $\lambda$ are equivalent. 
As mentioned in \autoref{ssec:leftandrightDmod} the twist $\lambda = 2$ corresponds to untwisted left $\sh{D}$-modules
because $\Omega \cong \sh{O}(-2)$. The Beilinson-Bernstein equivalence \cite{BB81} applies in the case $\lambda \geq 2$: Then $\Ho^1=0$ and
$\Ho^0: \modulecat\sh{D}(\lambda) \rightarrow \modulecat(\Uea\mathfrak{sl}_2)_{\chi_{\lambda}}$
is an exact equivalence. The vanishing of $\Ho^1$ in the above examples is consistent with this. 
The twist $\lambda = 1$ is usually referred to as singular, see \eqref{eq:twistedjxextensions} and Lemma \autoref{Lemma:RVermalambdaalpha}(3) for examples of how this twist differs from the others. 
One defines the dual $\Dual \sh{M}$ of a holonomic $\sh{D}(\lambda)$-module $\sh{M}$ via the 
above equivalence $\modulecat\sh{D} \rightarrow \modulecat\sh{D}(\lambda)$. 
Then $\Dual$ defines an exact contravariant auto-equivalence of 
the category of holonomic $\sh{D}(\lambda)$-modules. In the above examples $\Ho^0$ sends $\Dual$ to $(\cdot)^{\vee}$ in the
case $\lambda \geq 2$, but we are not aware of a reference proving that such a statement is true in general.
\end{Rem}

\section{Outlook} \label{sec:outlook}
\subsection{Generalization to a semisimple group}\label{ssec:generalizationtosemisimple}
Let us describe the setup of a possible generalization of this work
from $\SL_2$ to any semisimple algebraic group $G$ over $\C$. 
Let $B$ be a Borel subgroup of $G$, $T$ a maximal torus contained in $B$ and $U$ 
the unipotent radical of $B$. Let $\Phi^+$ be the set of positive roots of $G$ w.r.t. $B$.
For an element $w$ of the Weyl group $W = \Norm_G(T)/T$ we denote by $\dot{w}$
any preimage in the normalizer $\Norm_G(T)$ of $T$. All expressions
written below will be independent of such a choice unless mentioned otherwise. Consider the
flag variety $X=G/B$ of $G$. The stratification
of $X$ by $B$-orbits is the well-known Bruhat stratification.
It is given by $X = \bigsqcup_{w \in W}X_w$, 
where $X_w = B\dot{w}B$ is the Bruhat cell associated to $w$. Each $X_w$
is an affine space whose dimension is given by the length $\length(w)$ of $w$. 
\par
Fix a simple reflection $s_i \in W$ w.r.t. $B$. Then 
$B^{(i)} = B \cap \dot{s_i}B\dot{s_i}^{-1}$ defines a closed subgroup of $G$ 
that is contained in $B$ and contains $T$. Clearly
we have $B^{(i)} = T \ltimes U^{(i)}$, where $U^{(i)}$ is the subgroup of $U$
whose Lie algebra is spanned by the
root vectors associated to $\Phi^+\setminus\{\alpha_i\}$. 
Let $<$ be the Bruhat-Chevalley order on $W$.
One can show that the $B^{(i)}$-orbits in $X$ are given by $X_w$ and
$\dot{s_i} X_w$ for $s_i w \nless w$ and $X_w \cap \dot{s_i} X_w$ for $s_i w < w$. 
In particular, the stratification of $X$ by $B^{(i)}$-orbits refines the Bruhat stratification
and consists of finitely many locally closed subvarieties of $X$, which is in contrast
with the stratification of $X$ by $T$-orbits. 
Note that $\dot{s_i}$ defines an automorphism of $X$, depending on the
choice of $\dot{s_i}$, that interchanges
the $B^{(i)}$-orbits $X_w$ and $\dot{s_i}X_w$ and leaves the $B^{(i)}$-orbit
$X_w \cap \dot{s_i}X_w$ invariant. 
Observe that for $s_i w < w$ the $B$-orbit $X_w$ decomposes into two $B^{(i)}$-orbits,
namely $X_w = (X_w \cap \dot{s_i}X_w)\sqcup \dot{s_i} X_{s_i w}$. 
In fact $X_w \cap \dot{s_i} X_w$ is
isomorphic as a variety to $(\Aff^1\setminus\{0\})\times \Aff^{\length(w)-1}$.
Let us emphasize how we recover for $G=\SL_2$
the setting of the main text: In this case we have $X=\Proj^1$, our subgroup is the torus, $B^{(i)}=T$, and its
orbits are $\{0\}$, $\{\infty\}$ and $\C^{\times}$. For a suitable choice the automorphism $\dot{s_i}$
of $\Proj^1$ coincides with the involution $I$ defined in \autoref{ssec:autoequivalences-}.\par 

Coming back to the general $G$, given a parameter $\alpha \in \C$  it is clear how to
define a local system on the $B^{(i)}$-orbit $X_w \cap \dot{s_i} X_w$, generalizing $\Omega^{(\alpha)}_{\C^{\times}}$ 
from \autoref{ssec:defofOmegaalpha}, and the
different extensions to $X$ as $\lambda$-twisted $\sh{D}$-module. (The twist $\lambda$ is now an arbitrary integral weight 
of $G$ w.r.t. $T$.) It would be interesting
to identify, depending on the parameters $\alpha$ and $\lambda$, the sheaf cohomology groups as concrete modules over the Lie algebra $\Lie G$ of $G$ thereby generalizing the results of the main text. Here, the word concrete refers to having
some algebraic construction of the module, e.g. as a module induced from a proper subalgebra. 
Concerning the remaining $B^{(i)}$-orbits, it is known that from the $X_w$ one obtains (the usual highest weight) Verma and dual Verma $\Lie G$-modules,
while the $\dot{s_i}X_w$ are supposed to give $\Lie G$-modules differing from
the previous ones by a twist by the automorphism of $\Lie G$
defined by $\dot{s_i}$. 

\subsection{Case of the affine Kac-Moody Lie algebra $\widehat{\mathfrak{sl}_2}$}
Let us finally comment on similar constructions for the affine Kac-Moody Lie algebra $\widehat{\mathfrak{sl}_2}$. 
The discussion is parallel
to and depends on \autoref{ssec:generalizationtosemisimple}. Recall
that $\widehat{\mathfrak{sl}_2}$ is constructed from a triple $\left(\mathfrak{h},(\alpha_i)_{i \in \{0,1\}},
(\alpha_i^{\vee})_{i \in \{0,1\}}\right)$, where $\mathfrak{h}$ is a three dimensional
$\C$-vector space and the generalized Cartan matrix $\begin{pmatrix} 2 & -2\\ -2 & 2\end{pmatrix}$.
It has a triangular decomposition $\widehat{\mathfrak{sl}_2} = \mathfrak{n}^-\oplus \mathfrak{h}\oplus \mathfrak{n}$
and elements $f_i \in \mathfrak{n}^-$, $e_i \in \mathfrak{n}$, $i \in \{0,1\}$.  The flag variety $X$ of $\widehat{\mathfrak{sl}_2}$
has well-known formulations either as an ind-projective ind-variety \cite{BD91} or as a scheme \cite{Kas90} over $\C$. 
In the following we work with the latter version. The Weyl group $W$ of $\widehat{\mathfrak{sl}_2}$
is generated by $s_0$, $s_1$ with relations $s_0^2=s_1^2=1$. 
In \cite{KT95} it is explained how to attach to $w \in W$ a (finite dimensional)
Bruhat cell $X_w \cong \Aff^{\length(w)}$ that is locally closed in $X$.  
One has a replacement for $\dot{s_1}$
and for $s_1 w < w$ the intersection $X_w \cap \dot{s_1} X_w$ is still defined and isomorphic
to $(\Aff^1\setminus\{0\})\times \Aff^{\length(w)-1}$. Due to \cite{KT95}
one disposes of a category of $\lambda$-twisted right $\sh{D}$-modules on $X$ with support in the closure $\overline{X_w}$
of $X_w$ in $X$. 
To the objects of this category \cite{KT95} attach cohomology groups and construct a $\widehat{\mathfrak{sl}_2}$-action on these. Consequently it makes sense
to try to identify the cohomology groups of the $\sh{D}$-module extensions of the
local system on $X_w \cap \dot{s_1} X_w$ analogous to $\Omega^{(\alpha)}_{\C^{\times}}$ as
concrete $\widehat{\mathfrak{sl}_2}$-modules, completely parallel to what was
proposed in \autoref{ssec:generalizationtosemisimple}.  In fact, the 
corresponding identification of the cohomology groups for $X_w$ instead of $X_w \cap \dot{s_1} X_w$ and antidominant $\lambda$ constitutes the main theorem
\cite{KT95}[Theorem 3.4.1] of \cite{KT95}.  
Let us consider the simplest case $w=s_1$, then
$X_w \cap \dot{s_1} X_w \cong \Aff^1\setminus\{0\}$. We can then prove that for $-\lambda(\alpha_1^{\vee})\geq 2$
the global sections of the $!$- resp. $*$-extension form an induced module of the form $\Uea \widehat{\mathfrak{sl}_2} \otimes_{\Uea \mathfrak{p}_1} (\C_{\lambda}\otimes_{\C} M)$ with $M=\RVerma(-\lambda(\alpha_1^{\vee}),\alpha)$ as defined in \eqref{eq:RVermaalphalambda} resp. $M=\RVerma(-\lambda(\alpha_1^{\vee}),\alpha)^{\vee}$. Here the subalgebra $\mathfrak{p}_1=\mathfrak{n}\oplus \mathfrak{h}\oplus \C f_1$ acts on $\C_{\lambda}\otimes_{\C} M$ via the map of Lie algebras 
\begin{align*}
\mathfrak{p}_1 \twoheadrightarrow \{h \in \mathfrak{h}\; \vert\; \alpha_1(h)=0\}\oplus (\C f_1 \oplus
\C \alpha_1^{\vee} \oplus \C e_1)\;. 
\end{align*}
Note that this induced module is neither a highest nor a lowest weight module. 
Such $\widehat{\mathfrak{sl}_2}$-modules were introduced and analyzed in \cite{SS97}
and \cite{FST98} under the name relaxed Verma modules. 
It is natural to ask whether a similar result holds for general $w$ and we hope
to address this question in a future publication.

\bibliographystyle{alpha}
\bibliography{references}

\begin{thebibliography}{FST98}

\bibitem[BB81]{BB81}
A.~Beilinson and J.~Bernstein.
\newblock Localisation de $\mathfrak{g}$-modules.
\newblock {\em C. R. Acad. Sci. Paris S\'er. I Math.}, 292(1):15--18, 1981.

\bibitem[BB93]{BB93}
A.~Beilinson and J.~Bernstein.
\newblock A proof of {Jantzen} conjectures.
\newblock {\em Advances in Soviet mathematics}, 16:1--50, 1993.

\bibitem[BD91]{BD91}
A.~Beilinson and V.~Drinfeld.
\newblock {\em Quantization of {Hitchin}'s integrable system and {Hecke}
  eigensheaves}.
\newblock 1991.
\newblock unpublished.

\bibitem[Ber]{Ber}
J.~Bernstein.
\newblock Algebraic theory of {D}-modules.
\newblock
  \href{http://www.math.uchicago.edu/~mitya/langlands/Bernstein/Bernstein-dmod%
.ps}{available at
  \nolinkurl{http://www.math.uchicago.edu/~mitya/langlands.html}}.
\newblock unpublished.

\bibitem[BK81]{BK81}
J.L. Brylinski and M.~Kashiwara.
\newblock {Kazhdan-Lusztig} {Conjecture} and {Holonomic} {Systems}.
\newblock {\em Invent. math.}, 64:387--410, 1981.

\bibitem[ea87]{Bor87}
A.~Borel et~al.
\newblock {\em Algebraic D-Modules}.
\newblock Academic Press, 1987.

\bibitem[FST98]{FST98}
B.~L. Feigin, A.~M. Semikhatov, and I.~Yu. Tipunin.
\newblock Equivalence between chain categories or representations of affine
  sl(2) and {$N = 2$} superconformal algebras.
\newblock {\em J. Math. Phys.}, 39:3865--3905, 1998.

\bibitem[Gai05]{Gai05}
D.~Gaitsgory.
\newblock Geometric {R}epresentation {T}heory, {F}all 2005.
\newblock \href{http://www.math.harvard.edu/~gaitsgde/267y/catO.pdf}{available
  at \nolinkurl{http://www.math.harvard.edu/~gaitsgde}}, 2005.
\newblock Lecture notes.

\bibitem[Har77]{Har77}
R.~Hartshorne.
\newblock {\em Algebraic {Geometry}}.
\newblock Springer, 1977.

\bibitem[Jan]{Jan03}
J.~C. Jantzen.
\newblock {\em Representations of {Algebraic} {Groups}}, volume 107 of {\em
  Mathematical {Surveys} and {Monographs}}.
\newblock A.M.S., 2nd edition.

\bibitem[Kas90]{Kas90}
M.~Kashiwara.
\newblock The {Flag} {Manifold} of {Kac}-{Moody} {Lie} algebra.
\newblock {\em {Lie} algebra, {Algebraic Analysis}, {Geometry} and {Number
  Theory}}, 1990.

\bibitem[Kas00]{Kas00}
M.~Kashiwara.
\newblock {\em {D-modules and Microlocal Calculus}}.
\newblock Translations of mathematical monographs 217. A.M.S., 2000.

\bibitem[Kas08]{Kas08}
M.~Kashiwara.
\newblock Equivariant derived category and representations of real semisimple
  lie groups.
\newblock {\em Lecture Notes in Mathematics}, 1931:137--234, 2008.

\bibitem[KT95]{KT95}
M.~Kashiwara and T.~Tanisaki.
\newblock {Kazhdan-Lusztig conjecture for affine Lie algebras with negative
  level}.
\newblock {\em Duke Math. J.}, 77(1):21--62, 1995.

\bibitem[RS13]{RS13}
T.~Reichelt and Ch. Sevenheck.
\newblock Non-affine {Landau-Ginzburg} models and intersection cohomology.
\newblock arxiv, 2013.

\bibitem[SS97]{SS97}
A.~M. Semikhatov and V.~A. Sirota.
\newblock Embedding diagrams of {$N=2$} {Verma} modules and relaxed
  $\widehat{sl}(2)$ {Verma} modules.
\newblock arxiv, 1997.

\end{thebibliography}

\end{document}